\newtheorem{theorem}{Theorem}[section]
\newtheorem{corollary}[theorem]{Corollary}
\newtheorem{proposition}[theorem]{Proposition}
\theoremstyle{definition}
\newtheorem{definition}[theorem]{Definition}
\theoremstyle{remark}
\newtheorem{remark}[theorem]{Remark}
\newtheorem{algorithm}[theorem]{Algorithm}
\newtheorem{notation}[theorem]{Notation}
\newtheorem{question}[theorem]{Question}
\newtheorem{ex-prop}[theorem]{Example-Proposition}
\numberwithin{equation}{section}
\definecolor{gray}{rgb}{.5,.5,.5}
\definecolor{black}{rgb}{0,0,0}
\definecolor{blue}{rgb}{0,0,0}
\def\blue{\color{blue}}
\definecolor{red}{rgb}{0,0,0}
\def\red{\color{red}}
\definecolor{green}{rgb}{0,0,0}
\def\green{\color{green}}
\definecolor{yellow}{rgb}{1,1,.4}
\DeclareMathOperator{\Aut}{Aut}
\DeclareMathOperator{\Lat}{Lat}
\begin{document}

\title[Combinatorial symmetry of line arrangements and applications]{Combinatorial symmetry of line arrangements\\ and applications}

\author{Meirav Amram}
\address{Department of Mathematics, Bar-Ilan University, Ramat-Gan, 52900, Israel\newline and Shamoon College of Engineering,
Bialik/Basel Sts., Beer-Sheva 84100, Israel}
\email{meirav@macs.biu.ac.il, meiravt@sce.ac.il}

\author{Moshe Cohen}
\address{Department of Mathematics, Bar-Ilan University, Ramat Gan 52900, Israel \newline
and Department of Mathematics, Technion, Haifa 32000, Israel}
\email{cohenm10@macs.biu.ac.il}

\author{Hao Sun}
\address{Department of Mathematics, Bar-Ilan University, Ramat Gan 52900, Israel \newline
and Huazhong Normal University, People's Republic of China}
\email{hsunmath@gmail.com, hsun@mail.ccnu.edu.cn}

\author{Mina Teicher}
\address{Department of Mathematics, Bar-Ilan University, Ramat Gan 52900, Israel}
\email{teicher@macs.biu.ac.il}

\author{Fei Ye}
\address{Department of Mathematics, The University of Hong Kong, Hong Kong}
\email{fye@maths.hku.hk}

\author{Anna Zarkh}
\address{Department of Mathematics, Bar-Ilan University, Ramat Gan 52900, Israel}
\email{annazarkh@gmail.com}

%

\thanks{This work was partially supported by the Emmy Noether Research Institute for Mathematics of the Minerva Foundation of Germany, the Oswald Veblen Fund, the Institute for Advanced Study in Princeton, USA, and the Polytechnic Institute of New York University.}

\date{\today}

\begin{abstract}
We introduce an algorithm that exploits a combinatorial symmetry of an arrangement in order to produce a geometric reflection between two disconnected components of its moduli space.  We apply this method to disqualify three real examples found in previous work by the authors from being Zariski pairs.  Robustness is shown by its application to complex cases, as well.
\end{abstract}

\keywords{automorphism group, complement, embedding type, intersection lattice}
\subjclass[2010]{14N20, 52C35} 

\maketitle

\section{Introduction}
\label{sec:Intro}

A {\it line arrangement} $\mathcal{A}=\{L_1,\ldots,L_n\}$ in $\mathbb{CP}^2$ is a finite collection of projective lines.  The set $\Lat(\mathcal{A})=\{\bigcap_{i\in S}L_i | S\subseteq\{1, 2, \dots, n\}\}$ partially ordered by reverse inclusion is called the {\em intersection lattice} of $\mathcal{A}$.  Two line arrangements $\mathcal{A}$ and $\mathcal{B}$ are lattice isomorphic, denoted by $\mathcal{A}\sim \mathcal{B}$,  if up to a permutation on the labels of the lines their lattices are the same.  In this case we say that the arrangements have the same combinatorics.

A \emph{Zariski pair} of line arrangements is a pair of lattice isomorphic arrangements $\mathcal{A}\sim\mathcal{B}$ that have different embeddings in $\mathbb{CP}^2$.  {\red This means that the pairs ($\mathbb{CP}^2,\mathcal{A}$) and ($\mathbb{CP}^2,\mathcal{B}$) are not homeomorphic.  Rybnikov \cite{Ryb} found the first such pair of arrangements in 1998 and showed furthermore that the complements have different fundamental groups.   Artal Bartolo, Carmona Ruber, Cogolludo Agust\'in, and Marco Buzun\'ariz  \cite{ABC} give another example explicitly.

One necessary condition for a Zariski pair is a disconnected moduli space.  We define the \emph{moduli space of an arrangement $\mathcal{A}$} to be
\begin{equation*}
\label{eqn:MA}
\mathcal{M}_\mathcal{A}=\{\mathcal{B}\in ((\mathbb{CP}^2)^*)^n | \mathcal{B}\sim \mathcal{A})\}/ PGL(3, \mathbb{C}).
\end{equation*}  
By Randell's Isotopy Theorem \cite{Ran}, the embedding types of arrangements in the same connected component are the same.  
}

By studying moduli spaces, Nazir and Yoshinaga \cite{NazYos} proved that there is no Zariski pair of arrangements of up to eight complex lines and listed a classification of arrangements of nine lines without proof (later proved to be complete by Ye in \cite{Fei9}). The classification implies that there is also no Zariski pair of arrangements of nine lines.

Following this methodology, a classification of the moduli spaces of arrangements of ten lines was completed by the authors in \cite{Fei:10} and \cite{Fei:10b}.  By Theorem 5.3 and Corollary 5.5. in \cite{Fei:10b}, this gives a list of eighteen potential Zariski pairs, as determined via disconnected moduli spaces, for all complex line arrangements of ten lines satisfying a reasonable assumption \cite[Assumption 1.2]{Fei:10b}.

Just one of these arrangements has a moduli space of dimension one; the rest have dimension zero.  Of these seventeen, only seven are realizable with real coefficients.  Following the enumeration from \cite{Hao:distance} and intentionally omitting the cases $\{2\}$ through $\{5\}$, we rename them as follows:
\begin{itemize}
	\item[$\{1\}$. ] Equation (1) from \cite[Theorem 4.4]{Fei:10}
	\item[$\{6\}$. ] ($9_3$).iii.ACG. from \cite[Lemma 8.4]{Fei:10b}
	\item[$\{7\}$. ] ($9_3$).iii.BDF. from \cite[Lemma 8.4]{Fei:10b}
\end{itemize}
as we will consider them below.  Specifically these {\green three} have moduli spaces that are two distinct points.  {\red Let $t^\pm$ be the two solutions of the quadratic defining equation giving the two disconnected components of the moduli space.  Then we will refer to points of the moduli spaces throughout as $+$ and $-$.}

{\red
There are cases with moduli spaces that are two distinct points {\green (or more general two disconnected components of any dimension)} arising from complex conjugation $x\mapsto \overline{x}$, $y\mapsto \overline{y}$, and $z\mapsto \overline{z}$.  These have already been removed from the list above due to the fact that embedding types of two complex conjugate arrangements are the same.  Furthermore, Cohen and Suciu \cite[Theorem 3.9]{CoSuc} proved that the braid monodromies of complex conjugated curves are equivalent.

Motivated by the effect of complex conjugation acting on moduli spaces, our present results show the geometric symmetry $\varphi:x\mapsto y$, $y\mapsto x$, and $z\mapsto z$ can similarly be used to disqualify some potential Zariski pairs.
}

\medskip

\textbf{Results. }  {\red This work relies on Algorithm \ref{algorithm} to produce this geometric reflection $\varphi$ by exploiting a combinatorial $\mathbb{Z}_2$ subgroup of the automorphism group of the arrangement.}

We apply this technique to show that arrangements $\{1\}$, $\{6\}$, and $\{7\}$ are not Zariski pairs, as stated in the Main Theorem \ref{thm:main}.  These three arrangements have automorphism groups which contain a $\mathbb{Z}_2$ subgroup; the omitted four do not.

\medskip

{\blue \textbf{Ramifications. } We suspect that the authors of \cite{ABC} were aware of this type of situation based on several comments, specifically in Examples 3.4, 3.5, 3.6, Remark 3.8, and again in Section 5.  In fact they refer to our definition of the moduli space as the \emph{ordered moduli space}.

In our work we pair previous geometric techniques used by Nazir and Yoshinaga to determine this (ordered) moduli space together with insight from the automorphism group.  This gives us an understanding of the more general {moduli space} of Artal Bartolo et al.  In particular, our algorithm gives insight into where such a situation might arise.

}

\medskip

\textbf{Organization. } After some background definitions, Subsection \ref{subsec:main} presents the Algorithm \ref{algorithm} that we apply to several cases, which are listed in Table \ref{tab:summary} and considered in seperate sections below.

The new real cases $\{1\}$, $\{6\}$, and $\{7\}$ are listed in the Main Theorem \ref{thm:main} and considered in more detail in Section \ref{sec:real}.

The complex cases are considered in Section \ref{sec:complex} with Subsection \ref{subsec:lit} on two examples from the literature, the MacLane arrangement and the Nazir-Yoshinaga arrangement, and Subsection \ref{subsec:complex} on examples from \cite{Fei:10b}.  {\red The geometric reflection given by complex conjugation can already be used to disqualify these as Zariski pairs, but our geometric reflection $\varphi$ works, as well.}

Lastly Section \ref{sec:counter} points out two examples from the literature, the Falk-Sturmfels arrangement and Rybnikov's example, that act as counterexamples to our Algorithm \ref{algorithm} as it stands now.

%




\section{Determining the geometric reflection via combinatorial symmetry}
\label{sec:background}

{\red
The combinatorics of line arrangements can be given by means of several equivalent objects:  an intersection lattice, a line combinatorics (as seen in \cite{ABC}), a combinatorial type (as seen in \cite{ABC2}), and a configuration table (as seen in \cite{Grun}), which omits double points as these can be recovered from points of higher multiplicities {\green alone}.

It is this last most concise expression, the configuration table, that we use to describe the combinatorics in the examples below.  However, in order to define combinatorial symmetries, we keep the intersection lattice terminology:
}

\begin{definition}
Let $\mathcal{A}=\{L_1,\dots,L_n\}$ and $\mathcal{A}'=\{L'_1,\dots,L'_n\}$ be two line arrangements in $\mathbb{CP}^2$. A \emph{lattice isomorphism} between $\Lat$($\mathcal{A}$) and $\Lat$($\mathcal{A}'$) is a permutation $\tau$ on the index set $\{1,\dots,n\}$ for which $\tau(\Lat(\mathcal{A})):=\Lat(\{L_{\tau(1)},\dots,L_{\tau(n)}\})$ is identical to $\Lat(\mathcal{A}')$.
\end{definition}

\begin{definition}
\label{def:lattice}
Let $\Lat$($\mathcal{A}$) be the intersection lattice of a line arrangement.  
We denote by $\Aut(\mathcal{A})$ the \emph{automorphism group} or \emph{group of symmetries} of $\Lat$($\mathcal{A}$) and define it as the {\red group} of all lattice isomorphisms of $\Lat$($\mathcal{A}$).
\end{definition}

Examples of the automorphism groups of arrangements appear throughout the paper.  The unfamiliar reader will appreciate the proof of Proposition \ref{prop:anna} which explains this idea in careful detail for arrangement $\{1\}$.



\subsection{The main algorithm and results}  
\label{subsec:main}
We use the following algorithm to generate representatives of disconnected components of a moduli space $\mathcal{M}_{\mathcal{A}}$ that are mapped to each other by $\varphi$. 

\begin{notation}
The $\varphi$ that appears throughout the paper refers to the reflection $\varphi:\mathbb{CP}^2\rightarrow\mathbb{CP}^2$ that sends $x\mapsto y$, $y\mapsto x$, and $z\mapsto z$.
\end{notation}

\begin{algorithm}
\label{algorithm}
Given an arrangement $\mathcal{A}$ with some $\mathbb{Z}_2$ subgroup of the automorphism group Aut($\mathcal{A}$), we apply the following steps:
\begin{enumerate}
	\item Choose a permutation $\sigma$ that generates a $\mathbb{Z}_2$ subgroup, and identify its action on the lines of the arrangement $\mathcal{A}$.
	\item Choose two lines $L_i\neq L_j$ such that $L_{\sigma(i)}\neq L_i,L_j$ and $L_{\sigma(j)}\neq L_j$.  Set the lines $L_i$, $L_j$, $L_{\sigma(i)}$, and $L_{\sigma(j)}$ as the lines $x=0$, $x=z$, $y=0$, and $y=z$, respectively.  
	\item Apply the Grid Lemma 3.10 of \cite{Fei:10b} and the same techniques of previous work to obtain a parametrized equation that defines representative arrangements from each of the connected components of the moduli space $\mathcal{M}_{\mathcal{A}}$.
	\item {\red For an appropriate} pair $\mathcal{A}=\{L_1,\dots, L_n\}$ and $\mathcal{A}'=\{L'_1,\dots, L'_n\}$ of the obtained arrangements, check that the map $\varphi$ sends the line arrangement $\{L_1,\ldots,L_n\}$ to the line arrangement $\{L'_{\sigma(1)},\ldots,L'_{\sigma(n)}\}$.
\end{enumerate}
\end{algorithm}

{\red
\begin{remark}
\label{rem:inverse}
In the last part of the algorithm, the check must involve the parameters of the defining equation.  Let $t^\pm$ be the two solutions of the quadratic giving the two disconnected components of the moduli space.  Often the reflection $\varphi$ realizes the Galois conjugation $t^\pm\mapsto t^\mp$ by means of an additional inverse operation $t^\pm\mapsto (t^\mp)^{-1}$ on the parameter $t$ (up to some constant)!
\end{remark}
}


{\green 
A list of arrangements that we consider in this work can be found in Table \ref{tab:summary}.  We apply our Algorithm \ref{algorithm} successfully to those in the first two sections of the table.  See Main Theorem \ref{thm:main} below.

The first three arrangements are realizable with real coefficients, have two disconnected components, and have dimension zero.  Let $t^\pm$ be the two solutions of the quadratic defining equation giving the two disconnected components of the moduli space.  Then we will refer to points of the moduli spaces throughout as $+$ and $-$.

The next five arrangements cannot be realized with real coefficients, have two disconnected components, and have dimension either zero (the first two) or one (the last three).  In the two former cases, the variable $t^\pm$ stands as above, along with the notation $+$ and $-$.  In the three latter cases, the variable $s^\pm$ will give the two disconnected components of the moduli space, while the variable $t$ will act as the free variable giving dimension one.  See Notation \ref{rem:variables}.

The last two arrangements serve as counterexamples to the following Main Theorem \ref{thm:main}.
}

\begin{table}[h]
\begin{tabular}{|l||l||c||c||l|}
\hline
Case by section &	 Result & Over & $\Aut(\mathcal{A})$    & References	\\
\hline
\hline
$\{1\}$: Eqn (1)          &   Thm \ref{thm:anna}   &  $\mathbb{R}$  &   $\mathbb{Z}_2$  &   From \cite{Fei:10}.  See also \cite{Anna}. \\
$\{6\}$: ($9_3$).iii.ACG. &   Thm \ref{thm:6}   &  $\mathbb{R}$  &   $\mathbb{Z}_2$  &  From \cite{Fei:10b}. \\
$\{7\}$: ($9_3$).iii.BDF. &   Thm \ref{thm:7}  &  $\mathbb{R}$  &   $S_4$           &  From \cite{Fei:10b}. \\
\hline
MacLane          &   Thm \ref{thm:MacLane}   &  $\mathbb{C}$  &   GL($2;\mathbb{F}_3$)  & From \cite{Mac}.  See also \cite[Example 4.3]{NazYos}. \\
Nazir-Yoshinaga  &   Thm \ref{thm:NY}   &  $\mathbb{C}$  &   $S_3$           &    From \cite[Example 5.3]{NazYos}. \\
11.B.3.b.2.iii.
  		 &   Thm \ref{thm:11B3b2iii}  &  $\mathbb{C}$  &   $\mathbb{Z}_2$  & From \cite{Fei:10b}.  \\
11.B.3.b.2.iv.
			 &   Thm \ref{thm:11B3b2iv}  &  $\mathbb{C}$  &   $\mathbb{Z}_2$  &  From \cite{Fei:10b}. \\
11.B.2.iv.
   		 &   Thm \ref{thm:11B2iv}   &  $\mathbb{C}$  &   $\mathbb{Z}_2$  &  From \cite{Fei:10b}. \\
\hline
Falk-Sturmfels   &   Rem \ref{rem:FS}    &  $\mathbb{R}$  &   $\mathbb{Z}_4$  &        From \cite{CoSuc}.  See also \cite[Example 5.2]{NazYos}. \\
Rybnikov         &   Rem \ref{rem:Ryb}   & $\mathbb{C}$   &   $S_3\times\mathbb{Z}_2$  &  From \cite{Ryb}.  See also \cite{ABC}.  \\
\hline
\end{tabular}
\caption{A list of the arrangements considered below.}
\label{tab:summary}
\end{table}


\begin{theorem}[Main Theorem]
\label{thm:main}
For the arrangements $\{1\}$, $\{6\}$, and $\{7\}$ that each have a $\mathbb{Z}_2$ subgroup of their automorphism group, the map $\varphi:  x\mapsto y$, $y\mapsto x${\red , and $z\mapsto z$} is a homeomorphism between the complements of representatives of the two components of the moduli space.
\end{theorem}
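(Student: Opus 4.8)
The plan is to treat each of the three arrangements $\{1\}$, $\{6\}$, and $\{7\}$ by running through Algorithm \ref{algorithm}, the point being that in each case the geometric reflection $\varphi$ turns out to be precisely the geometric incarnation of the chosen combinatorial $\mathbb{Z}_2$ symmetry $\sigma$. Since $\varphi$ is a projective transformation of $\mathbb{CP}^2$, it is automatically a homeomorphism of the ambient space, so the entire content of the theorem is to exhibit representatives $\mathcal{A}(t^+)$ and $\mathcal{A}(t^-)$ of the two components of $\mathcal{M}_{\mathcal{A}}$ for which $\varphi$ carries one onto the other; it then restricts to a homeomorphism of their complements.

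First, for each arrangement I would fix a generator $\sigma$ of some $\mathbb{Z}_2 \leq \Aut(\mathcal{A})$ — for $\{1\}$ and $\{6\}$ the whole automorphism group is $\mathbb{Z}_2$, while for $\{7\}$ one may take any order-two element of $S_4$ — and read off its action on the lines. Following steps (1)--(2) of the algorithm, I would then select two lines $L_i \neq L_j$ with $L_{\sigma(i)} \neq L_i, L_j$ and $L_{\sigma(j)} \neq L_j$, and normalize coordinates so that $L_i, L_j, L_{\sigma(i)}, L_{\sigma(j)}$ become $x=0$, $x=z$, $y=0$, $y=z$. This normalization is the crux of the setup: because $\varphi$ interchanges $x=0 \leftrightarrow y=0$ and $x=z \leftrightarrow y=z$, the reflection $\varphi$ already realizes $\sigma$ on these four lines, and the remaining task is to show this matching persists across all of the other lines of the arrangement.

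Next, using the Grid Lemma \cite[Lemma 3.10]{Fei:10b} together with the earlier parametrization techniques (step (3)), I would write down defining equations for the remaining lines in terms of a single parameter $t$ subject to a quadratic relation, whose two roots $t^+, t^-$ yield the two representatives $\mathcal{A}(t^+)$ and $\mathcal{A}(t^-)$. The decisive computation (step (4)) is to apply $\varphi$ to the line equations of $\mathcal{A}(t^+)$, relabel the lines by $\sigma$, and identify the result with $\mathcal{A}(t^-)$. As flagged in Remark \ref{rem:inverse}, the natural expectation is that $\varphi$ does not fix the parameter but instead interchanges the Galois-conjugate roots, frequently through an inversion $t^\pm \mapsto (t^\mp)^{-1}$ up to a constant.

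The main obstacle is exactly this last verification: pinning down the precise parameter substitution induced by $\varphi \circ \sigma$ and confirming that it genuinely implements the conjugation $t^+ \leftrightarrow t^-$ between the two components — rather than fixing a component or producing a spurious relation — which requires carrying the reflection and the relabeling through every line equation and reconciling them against the quadratic that cuts out the moduli space. Once this matching is established in each of the three cases, $\varphi$ restricts to the desired homeomorphism $\mathbb{CP}^2 \setminus \mathcal{A}(t^+) \to \mathbb{CP}^2 \setminus \mathcal{A}(t^-)$, and the three cases together prove the theorem; the explicit calculations are carried out in Theorems \ref{thm:anna}, \ref{thm:6}, and \ref{thm:7}.
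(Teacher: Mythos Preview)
Your proposal is correct and follows essentially the same approach as the paper: the Main Theorem's proof there is just a pointer to Theorems \ref{thm:anna}, \ref{thm:6}, and \ref{thm:7}, each of which runs Algorithm \ref{algorithm} exactly as you describe (choose $\sigma$, normalize four $\sigma$-paired lines to $x=0$, $x=z$, $y=0$, $y=z$, parametrize by a quadratic in $t$, and verify $\varphi(L_i^+)=L_{\sigma(i)}^-$ via $t^+t^-=-1$). Your write-up is in fact a more informative account of that strategy than the paper's one-line proof.
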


\begin{proof}
This result summarizes results occuring later in the paper.  For details see {\green the proofs of} Theorems \ref{thm:anna}, \ref{thm:6}, and \ref{thm:7}.
\end{proof}

\begin{question}
\label{question:main}
Might this Main Theorem \ref{thm:main} hold in general under some suitable conditions?
\end{question}

\section{Geometric symmetry implies combinatorial symmetry}
\label{sec:converse}

Before we turn to demonstrating our algorithm, we prove the following elementary observation (implicitly mentioned in \cite{ABC}):

\begin{proposition} \label{prop:converse}
Let $\mathcal{A}=\{L_1,\dots,L_n\}$ and $\mathcal{A}'=\{L'_1,\dots,L'_n\}$ be two line arrangements in $\mathbb{CP}^2$ that represent different elements in the moduli space $\mathcal{M}_{\mathcal{A}}$ and let $\varphi:\mathbb{CP}^2\to \mathbb{CP}^2$ be the above reflection. If $\varphi(\cup_{i=1}^{n} L_i)=\cup_{i=1}^{n} L'_i$ then there exists a lattice isomorphism $\sigma$ such that $\sigma\neq id$ and $\sigma(\Lat(\mathcal{A}))=\Lat(\mathcal{A}')$. 
\end{proposition}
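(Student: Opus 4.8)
The plan is to use the fact that $\varphi$ is a projective linear automorphism of $\mathbb{CP}^2$ (it is induced by the permutation matrix interchanging the first two coordinates), so it carries lines to lines bijectively and preserves all incidences among them. First I would observe that the hypothesis $\varphi(\cup_{i=1}^n L_i)=\cup_{i=1}^n L'_i$ lets us extract a relabeling: for each $i$, the image $\varphi(L_i)$ is a projective line contained in the finite union $\cup_{i=1}^n L'_i$, hence, by irreducibility of a line, it coincides with exactly one of the $L'_j$. Since $\varphi$ is a bijection whose inverse also sends lines to lines, and each arrangement consists of $n$ distinct lines, this assignment is a permutation $\pi$ of $\{1,\dots,n\}$ with $\varphi(L_i)=L'_{\pi(i)}$.

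Next I would check that $\pi$ is a lattice isomorphism in the sense of the Definition above. Because $\varphi$ is invertible and commutes with intersections, for any index set $S$ we have $\bigcap_{i\in S} L'_{\pi(i)}=\varphi\bigl(\bigcap_{i\in S} L_i\bigr)$; thus $\bigcap_{i\in S}L_i$ is a single point if and only if its image $\bigcap_{i\in S}L'_{\pi(i)}$ is, and $L_k$ passes through $\bigcap_{i\in S}L_i$ exactly when $L'_{\pi(k)}$ passes through the image point. Hence the flats of $\Lat(\mathcal{A})$ correspond bijectively, via $\pi$, to the flats of $\Lat(\mathcal{A}')$, so $\pi$ (or $\pi^{-1}$, according to the indexing convention of the Definition) witnesses $\sigma(\Lat(\mathcal{A}))=\Lat(\mathcal{A}')$.

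The main point to secure is that $\sigma$ can be taken \emph{nontrivial}, and this is where the hypothesis that $\mathcal{A}$ and $\mathcal{A}'$ are distinct points of $\mathcal{M}_\mathcal{A}$ enters. I would argue by contradiction: if $\pi=id$, then $\varphi(L_i)=L'_i$ for every $i$, so the label-preserving projective transformation $\varphi\in PGL(3,\mathbb{C})$ sends $\mathcal{A}$ to $\mathcal{A}'$. By the definition of $\mathcal{M}_\mathcal{A}$ as arrangements modulo $PGL(3,\mathbb{C})$, this would force $\mathcal{A}$ and $\mathcal{A}'$ to represent the same element of the moduli space, contrary to assumption. Therefore $\pi\neq id$, and we may take $\sigma=\pi$ (resp. $\pi^{-1}$), completing the argument.

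I expect this last step to be the only genuinely delicate one: the construction of $\sigma$ is essentially automatic once $\varphi$ is recognized as a projective linearity, whereas the nontriviality rests on correctly reading ``different elements of the moduli space'' as ``different $PGL(3,\mathbb{C})$-orbits of labeled arrangements.'' A secondary point worth stating cleanly is that the set-level hypothesis $\varphi(\cup L_i)=\cup L'_i$ genuinely upgrades to the labeled permutation $\pi$, which is what makes $\sigma$ well-defined as a permutation of the common index set.
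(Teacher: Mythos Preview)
Your argument is correct and is essentially the same as the paper's: both extract the permutation $\pi$ from the equality of the underlying curves, verify it is a lattice isomorphism (you via incidence-preservation of $\varphi$, the paper via membership in the same moduli space), and rule out $\pi=id$ by noting that $\varphi\in PGL(3,\mathbb{C})$ would then identify $\mathcal{A}$ with $\mathcal{A}'$ in $\mathcal{M}_{\mathcal{A}}$. The only cosmetic difference is that the paper phrases the nontriviality step directly rather than by contradiction.
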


\begin{proof}
Consider the arrangement $\mathcal{A}$ and it's image under the reflection $\varphi(\mathcal{A})=\{\varphi(L_1),\dots,\varphi(L_n)\}$.  {\green The affine picture is shown in Figure \ref{fig:proof} below.} Since the moduli space is defined by modding out the action of $PGL(3,\mathbb{C})$, the arrangements $\mathcal{A}$ and $\varphi(\mathcal{A})$ represent the same element in $\mathcal{M}_{\mathcal{A}}$.

\begin{figure}[h!]
\centering
\includegraphics[width=\textwidth]{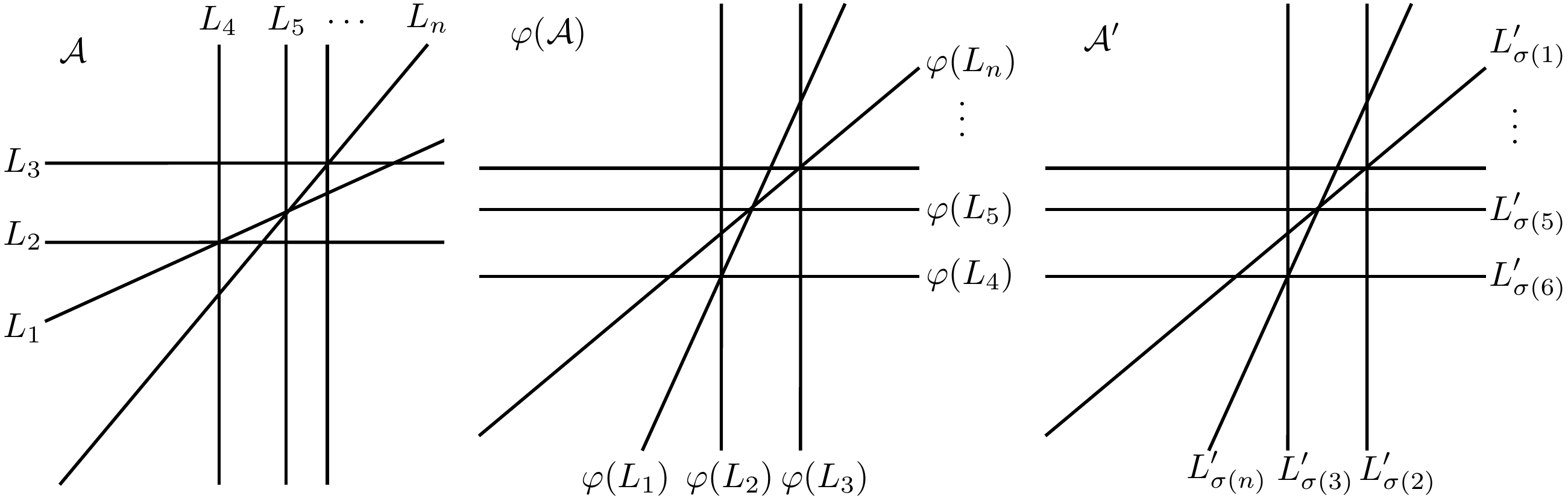}
\caption{\green An arrangement $\mathcal{A}$, its image $\varphi(\mathcal{A})$ under the reflection, and the combinatorially equivalent arrangement $\mathcal{A}'$.}
\label{fig:proof}
\end{figure}

We know that the union of the lines in $\varphi(\mathcal{A})$ and $\mathcal{A}'$ produces the same curve in $\mathbb{C}^2$:
\begin{equation} \label{1} \cup_{i=1}^n\varphi(L_i)=\cup_{i=1}^nL'_i.\end{equation}

On the other hand, since the arrangements $\mathcal{A}$ and $\mathcal{A}'$ represent different elements in $\mathcal{M}_{\mathcal{A}}$, we know also that as arrangements, $\varphi(\mathcal{A})$ and $\mathcal{A}'$ are not equal. That is: 
\begin{equation} \label{2} \varphi(L_i)\neq L'_i \quad \text{for some} \quad i\in\{1,\dots, n\}.\end{equation}

Equations \ref{1} and \ref{2} imply that the arrangements $\varphi(\mathcal{A})$ and $\mathcal{A}'$ must differ by the indices on their lines. This means that there exists a permutation $\sigma\neq id$ of the index set $\{1,\dots,n\}$ such that:
$$ \varphi(L_i)= L'_{\sigma(i)} \quad \text{for all} \quad i\in\{1,\dots, n\}.$$
Moreover, since  $\varphi(\mathcal{A})$ and $\mathcal{A}'$ represent elements of the same moduli space $\mathcal{M}_{\mathcal{A}}$, we know that $\sigma$ is a {\green lattice} isomorphism of $\Lat(\mathcal{A})$. 
\end{proof}

\begin{corollary} \label{cor:converse}
If $\Aut(\mathcal{A})$ is trivial, then disconnected components of $\mathcal{M}_{\mathcal{A}}$ are not symmetric to each other via a reflection in projective line in $\mathbb{CP}^2$.
\end{corollary}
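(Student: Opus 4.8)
The plan is to deduce this directly from Proposition \ref{prop:converse} by contraposition. Suppose toward a contradiction that two representatives $\mathcal{A}=\{L_1,\dots,L_n\}$ and $\mathcal{A}'=\{L'_1,\dots,L'_n\}$ of distinct connected components of $\mathcal{M}_{\mathcal{A}}$ are symmetric via some reflection of $\mathbb{CP}^2$ across a projective line. My first move would be to reduce such an arbitrary reflection to the standard $\varphi$. An involution of $\mathbb{CP}^2$ that fixes a projective line pointwise is conjugate in $PGL(3,\mathbb{C})$ to $\varphi:[x:y:z]\mapsto[y:x:z]$, whose fixed locus is the line $\{x=y\}$ together with the isolated point $[1:-1:0]$. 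Because $\mathcal{M}_{\mathcal{A}}$ is defined only up to the $PGL(3,\mathbb{C})$-action, conjugating both $\mathcal{A}$ and $\mathcal{A}'$ by the coordinate change carrying the given reflection to $\varphi$ leaves the represented moduli points unchanged. Thus I may assume the symmetry is exactly $\varphi$ and that $\varphi(\cup_{i=1}^n L_i)=\cup_{i=1}^n L'_i$.

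With this reduction in place, Proposition \ref{prop:converse} applies verbatim and yields a lattice isomorphism $\sigma\neq\mathrm{id}$ with $\sigma(\Lat(\mathcal{A}))=\Lat(\mathcal{A}')$. The one remaining step, and the point at which a little care is needed, is to recognize $\sigma$ as an element of $\Aut(\mathcal{A})$. Since $\mathcal{A}$ and $\mathcal{A}'$ are representatives living inside the single ordered moduli space $\mathcal{M}_{\mathcal{A}}$, their intersection lattices agree as labeled lattices: $\Lat(\mathcal{A})=\Lat(\mathcal{A}')$. Substituting this into the previous equality gives $\sigma(\Lat(\mathcal{A}))=\Lat(\mathcal{A})$, so by Definition \ref{def:lattice} we have $\sigma\in\Aut(\mathcal{A})$, and $\sigma\neq\mathrm{id}$ makes it a nontrivial element. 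This contradicts the hypothesis that $\Aut(\mathcal{A})$ is trivial, completing the argument.

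I expect the main obstacle to be the bookkeeping in that last step, namely justifying $\Lat(\mathcal{A})=\Lat(\mathcal{A}')$ as an equality of labeled lattices rather than a mere lattice isomorphism. This is precisely the distinction between the ordered moduli space used here and the unordered moduli space of Artal Bartolo et al.\ referenced in the introduction: the two components must be compared with respect to a common fixed labeling for the conclusion $\sigma(\Lat(\mathcal{A}))=\Lat(\mathcal{A})$ to hold. By contrast, the reduction to $\varphi$ in the first paragraph is routine linear algebra---classifying the involutions of $PGL(3,\mathbb{C})$ with a pointwise-fixed line---so I anticipate no real difficulty there.
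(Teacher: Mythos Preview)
Your proof is correct and follows essentially the same approach as the paper: reduce an arbitrary reflection to the standard $\varphi$ by a change of coordinates in $PGL(3,\mathbb{C})$, then invoke Proposition~\ref{prop:converse}. Your version is more detailed than the paper's---in particular, you make explicit the step that the resulting $\sigma$ lies in $\Aut(\mathcal{A})$ because $\Lat(\mathcal{A})=\Lat(\mathcal{A}')$ as labeled lattices, a point the paper leaves implicit.
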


\begin{proof}
Suppose {\green by contrapositive} there is a reflection in some line $L$. By changing coordinates in $\mathbb{CP}^2$, we can assume that the line $L$ has the defining equation $y=x$. The conclusion then follows directly from proposition \ref{prop:converse}.
\end{proof}

Corollary \ref{cor:converse} justifies the approach taken in our algorithm.  In order to find geometric reflections we must first identify combinatorial symmetries {\green via} $\Aut(\mathcal{A})$. What our examples show, furthermore, is that geometric reflections directly correspond to combinatorial reflections.

\section{Application to real ten-line arrangements}
\label{sec:real}

Although seven real cases of ten-line arrangements {\blue with disconnected moduli space} were produced in \cite{Fei:10} and \cite{Fei:10b}, we only {\blue apply our algorithm} to those that have a $\mathbb{Z}_2$ subgroup of the automorphism group. {\blue We start treating each of the cases $\{1\}$, $\{6\}$ and $\{7\}$ by {\green producing such a subgroup}.} We intentionally omit the proofs that the other four cases $\{2\}$ through $\{5\}$ do not contain such a symmetry. 

 {\blue

\begin{notation}
We refer to $\{1\}$, $\{6\}$ and $\{7\}$ as arrangements, so as to consider them as possible realizations of the combinatorics corresponding to these cases. 
Representatives of the two disconnected components of the moduli spaces of {\green $\{i\}$} are denoted by  {\green $\{i\}^+$ and $\{i\}^-$ for $i=1,6,7$}.  
  The figures depicting these arrangements show real sections of affine arrangements obtained by choosing the line $z=0$ as the line at infinity.
\end{notation}
}


Further treatment of Arrangement $\{1\}$ can be found in the last section of 
 the Master's thesis of Zarkh \cite{Anna}.

\medskip

\textbf{Example:  Arrangement $\{1\}$. }  We consider the combinatorics of arrangement $\{1\}$ given by the configuration {\green table in} Table \ref{tab:1}. 
\begin{table}[h]
\begin{tabular}{|ccc|ccc|ccc|c|}
\hline
$L_1$ &  $L_2$ & $L_3$ & $L_4$    &  $L_5$ & $L_6$ & $L_7$ & $L_8$ & $L_9$ & $L_{10}$ \\
\hline
$q_1$ & $q_1$ & $q_1$  & $q_2$	&  $q_2$ & $q_2$ & $e_1$ & $e_1$  &	$e_2$ & $e_1$ \\
$e_7$ & $e_4$ & $e_2$  & $e_2$	&  $e_3$ & $e_4$ & $e_3$ & $e_5$  &	$e_3$ & $q_1$ \\
$e_8$ & $e_5$ & $e_6$  & $e_8$    &  $e_7$ & $e_6$ & $e_4$ & $e_6$  &	$e_5$ & $q_2$ \\
          &            &            &              &            &            & $e_8$ & $e_7$  &	           &         \\
\hline
\end{tabular}
\caption{A configuration table for the triples and quadruples of the arrangement $\{1\}$.}
\label{tab:1}
\end{table}

 Figure \ref{1Awith_points} shows an affine picture of a ten line arrangement realizing this configuration, with the line $L_{10}$ is plotted as the line at infinity. 
\begin{figure}[h!]
 \centering
 \includegraphics[width=0.45\textwidth]{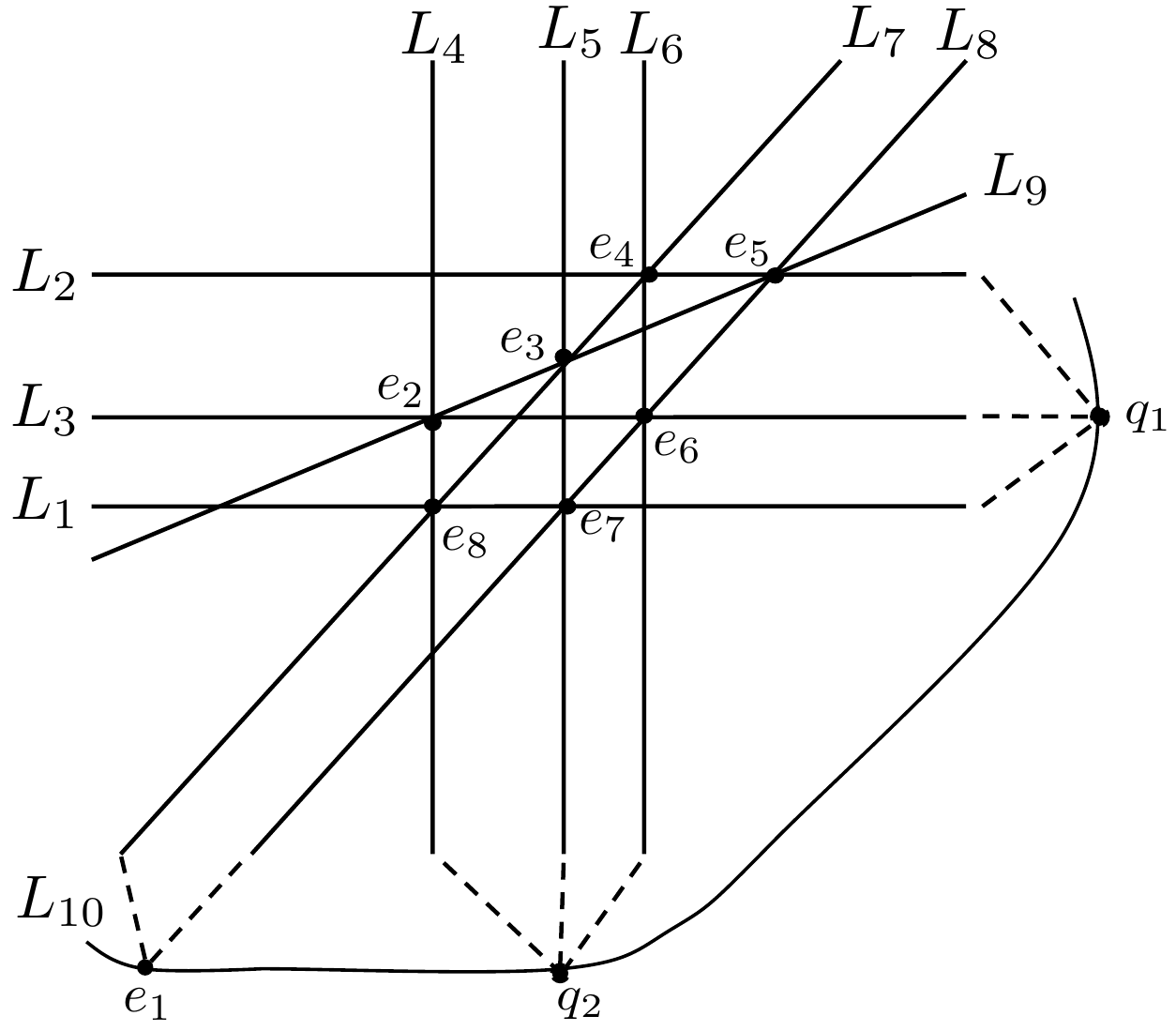}
\caption{An arrangement realizing the combinatorics of $\{1\}$.}\label{1Awith_points}
\end{figure}

\begin{proposition}
\label{prop:anna}
For the case $\{1\}$ from  \cite{Fei:10}, the group of symmetries $\Aut(\{1\})$ is $\mathbb{Z}_2$.
\end{proposition}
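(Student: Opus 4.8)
The plan is to extract the full incidence structure from the configuration table (Table \ref{tab:1}), recover the multiplicity of every singular point and the ``singularity profile'' of every line, and then argue that these combinatorial invariants rigidly pin down each lattice automorphism. First I would record that $q_1,q_2$ are the only quadruple points, lying on $\{L_1,L_2,L_3,L_{10}\}$ and $\{L_4,L_5,L_6,L_{10}\}$ respectively, while $e_1,\dots,e_8$ are triple points and all remaining intersections are doubles (recovered automatically from the table). Since any lattice isomorphism preserves the multiplicity of each point, the unordered pair $\{q_1,q_2\}$ is preserved, and with it the set $\{L_1,\dots,L_6\}$ of lines through a quadruple point together with its complement $\{L_7,L_8,L_9,L_{10}\}$.

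Next I would isolate the forced fixed lines using the profile (number of quadruple and triple points on each line), an invariant of $\Aut(\{1\})$. The line $L_{10}$ is the unique line carrying two quadruples, $L_9$ is the unique line carrying three triples and no quadruple, and $L_7,L_8$ are the only lines carrying four triples. Hence $L_{10}$ and $L_9$ are each fixed, while $\{L_7,L_8\}$ is preserved as a set. This immediately fixes $e_1=L_7\cap L_8\cap L_{10}$ and also $e_2=L_3\cap L_4\cap L_9$, the latter because $e_2$ is the unique triple on $L_9$ whose other two lines both pass through a quadruple; consequently $\{L_3,L_4\}$ is preserved.

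I would then finish with a two-case analysis governed by the action on $\{L_7,L_8\}$. The key auxiliary fact, read off the table, is that via its triple points $L_7$ is joined to every line except $L_3$, and $L_8$ to every line except $L_4$. Thus fixing $L_7$ forces fixing $L_3$ (so $q_1,q_2$ are not swapped), whereas swapping $L_7\leftrightarrow L_8$ forces $L_3\leftrightarrow L_4$. In the first case, the fixed triples $e_3=L_5\cap L_7\cap L_9$ and $e_5=L_2\cap L_8\cap L_9$ fix $L_5$ and $L_2$, and then $L_1,L_6$ are the only remaining lines through $q_1,q_2$, so every line is fixed and $\tau=\mathrm{id}$. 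In the second case the same two triples are interchanged, forcing $L_2\leftrightarrow L_5$ and hence $L_1\leftrightarrow L_6$, which singles out one explicit involution $\sigma$. To close the argument I would exhibit $\sigma$ (namely $L_1\leftrightarrow L_6$, $L_2\leftrightarrow L_5$, $L_3\leftrightarrow L_4$, $L_7\leftrightarrow L_8$, fixing $L_9,L_{10}$) and verify directly on the table that it permutes the singular points while preserving multiplicities (so that $q_1\leftrightarrow q_2$, $e_1,e_2$ are fixed, $e_3\leftrightarrow e_5$, $e_4\leftrightarrow e_7$, $e_6\leftrightarrow e_8$), giving $\mathbb{Z}_2\subseteq\Aut(\{1\})$; together with the case analysis this yields equality.

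The main obstacle I anticipate is purely organizational: making the forcing arguments airtight so that no stray automorphism is overlooked. The delicate structural point is the asymmetry ``$L_7$ misses only $L_3$, $L_8$ misses only $L_4$,'' since this is exactly what ties the choice on $\{L_7,L_8\}$ to the choice on $\{q_1,q_2\}$ and ultimately collapses the group to order two; confirming it requires a careful line-by-line reading of every triple in the table.
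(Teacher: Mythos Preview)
Your proposal is correct and follows essentially the same line-by-line forcing argument as the paper: fix $L_{10}$ and $L_9$ by their unique profiles, fix $e_2$, preserve $\{L_7,L_8\}$, and then let a two-case analysis on $\{L_7,L_8\}$ propagate to the remaining lines. The only cosmetic difference is in how you tie the choice on $\{L_7,L_8\}$ to the choice on $\{L_3,L_4\}$: you use the clean ``$L_7$ misses only $L_3$, $L_8$ misses only $L_4$'' observation, whereas the paper threads the implication through $e_3,e_5\to L_2,L_5\to q_1,q_2\to L_3,L_4$; both arguments are equivalent readings of the same configuration table, and your explicit verification that $\sigma$ really is a lattice automorphism is a welcome addition the paper leaves implicit.
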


\begin{proof}
Let $\tau \in \Aut(\{1\})$. We determine $\tau$ using the following arguments: \\ \\
\begin{tabular}{@{} p{0.73\textwidth}p{0.07\textwidth}p{0.2\textwidth}}
			
			 $L_{10}$ is the only line that passes through two quadruples. & 		
			$\implies$ & 
			\fbox{$\tau(L_{10})=L_{10}$}\\

			{}&{}&{}		\\

			$L_9$ is the only line that passes through three triples and does not pass
			 through a quadruple. & 
			$\implies$ & 
			\fbox{$\tau(L_9)=L_9$}\\

			{}&{}&{}		\\

			$e_2$ is the only triple on $L_9$ that is on two lines ($L_3$ and $L_4$)
			 that {\green both} pass through a quadruple. &
			$\implies$ &
			\fbox{$\tau(e_2)=e_2$} \\  

			{}&{}&{}	\\
		
\end{tabular} 
\begin{tabular}{@{}p{0.43\textwidth}p{0.07\textwidth}p{0.2\textwidth}p{0.05\textwidth}p{0.2\textwidth}}

			 $L_{7}$ and $L_8$ are the only lines that pass & 		
			$\implies$ &
			 \fbox{$\tau(L_{7})=L_{7}$} & or &
			\fbox{$\tau(L_{7})=L_{8}$}\\

			through four triples. &{}& \fbox{$\tau(L_{8})=L_{8}$}&{}&\fbox{$\tau(L_{8})=L_{7}$}\\

			{}&{}&{}&{}&{}	\\
		
\end{tabular} 
\begin{tabular}{@{}p{0.43\textwidth}p{0.07\textwidth}p{0.2\textwidth}p{0.05\textwidth}p{0.2\textwidth}}

	 		$e_3$ and $e_5$ are intersection points of lines & 
			$\implies$ &
			 \fbox{$\tau(e_{3})=e_{3}$} & {} &
			\fbox{$\tau(e_{3})=e_{5}$}\\

			 already determined.&{}& \fbox{$\tau(e_{5})=e_{5}$}&{}&\fbox{$\tau(e_{5})=e_{3}$}\\

			{}&{}&{}&{}&{}	\\

\end{tabular} 
\begin{tabular}{@{}p{0.43\textwidth}p{0.07\textwidth}p{0.2\textwidth}p{0.05\textwidth}p{0.2\textwidth}}

			$L_5$ and $L_2$ are the last lines that pass & 		
			$\implies$ &
			 \fbox{$\tau(L_{5})=L_{5}$} & {} &
			\fbox{$\tau(L_{5})=L_{2}$}\\

			through $e_3$ and $e_5$.&{}& \fbox{$\tau(L_{2})=L_{2}$}&{}&\fbox{$\tau(L_{2})=L_{5}$}\\

			{}&{}&{}&{}&{}	\\
\end{tabular} 
\begin{tabular}{@{}p{0.43\textwidth}p{0.07\textwidth}p{0.2\textwidth}p{0.05\textwidth}p{0.2\textwidth}}

			$L_2$ and $L_5$ lie on different quadruple points.& 
			$\implies$ &
			 \fbox{$\tau(q_{1})=q_{1}$} & {} &
			\fbox{$\tau(q_{1})=q_{2}$}\\

			{}&{}& \fbox{$\tau(q_{2})=q_{2}$}&{}&\fbox{$\tau(q_{2})=q_{1}$}\\

			{}&{}&{}&{}&{}	\\

\end{tabular} 
\begin{tabular}{@{}p{0.43\textwidth}p{0.07\textwidth}p{0.2\textwidth}p{0.05\textwidth}p{0.2\textwidth}}

			$L_3$ and $L_4$ both pass through the fixed $e_2$.& 		
			$\implies$ &
			 \fbox{$\tau(L_{3})=L_{3}$} & {} &
			\fbox{$\tau(L_{3})=L_{4}$}\\

			{}&{}& \fbox{$\tau(L_{4})=L_{4}$}&{}&\fbox{$\tau(L_{4})=L_{3}$}\\

			{}&{}&{}&{}&{}	\\

\end{tabular} 
\begin{tabular}{@{}p{0.43\textwidth}p{0.07\textwidth}p{0.2\textwidth}p{0.05\textwidth}p{0.2\textwidth}}

			$L_1$ and $L_6$ are mapped by which quadruple & 		
			$\implies$ &
			 \fbox{$\tau(L_{1})=L_{1}$} & {} &
			\fbox{$\tau(L_{1})=L_{6}$}\\

			they are on.&{}& \fbox{$\tau(L_{6})=L_{6}$}&{}&\fbox{$\tau(L_{6})=L_{1}$}\\
			{}&{}&{}&{}&{}	\\
\end{tabular} 
The above arguments imply that $\tau$ can be either the identity or $\sigma=$($L_1$ $L_6$)($L_2$ $L_5$)($L_3$ $L_4$)($L_7$ $L_8$). Thus the automorphism group is $\mathbb{Z}_2$ with the above $\sigma$ as the non-trivial element.
\end{proof}


\begin{figure}[h!]
        \begin{subfigure}{0.4\textwidth}
               \centering
                \includegraphics[width=\textwidth]{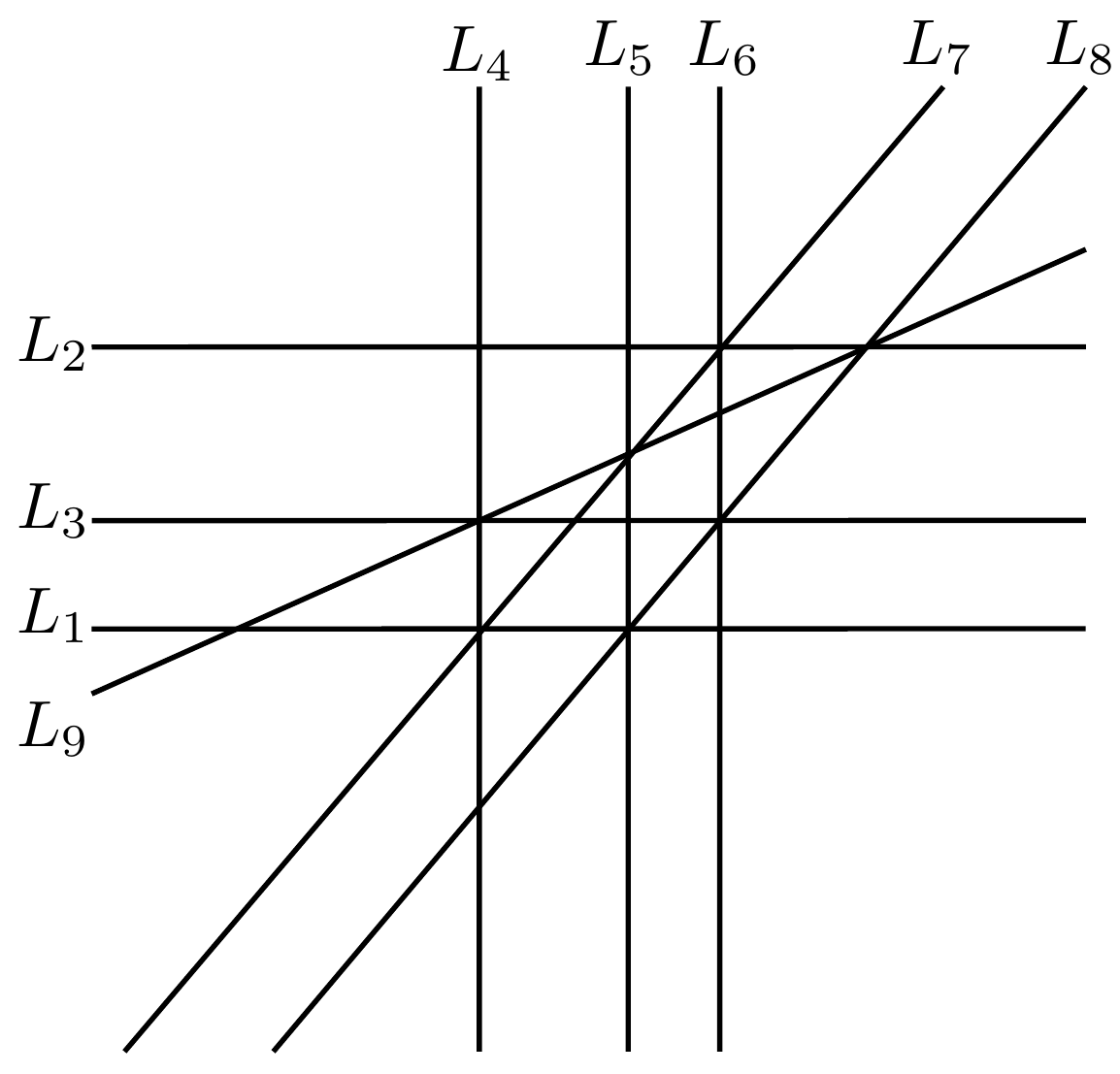}
                $\{1\}^+$
        \end{subfigure} 
\qquad \qquad
        \begin{subfigure}{0.4\textwidth}
                \centering
                \includegraphics[width=\textwidth]{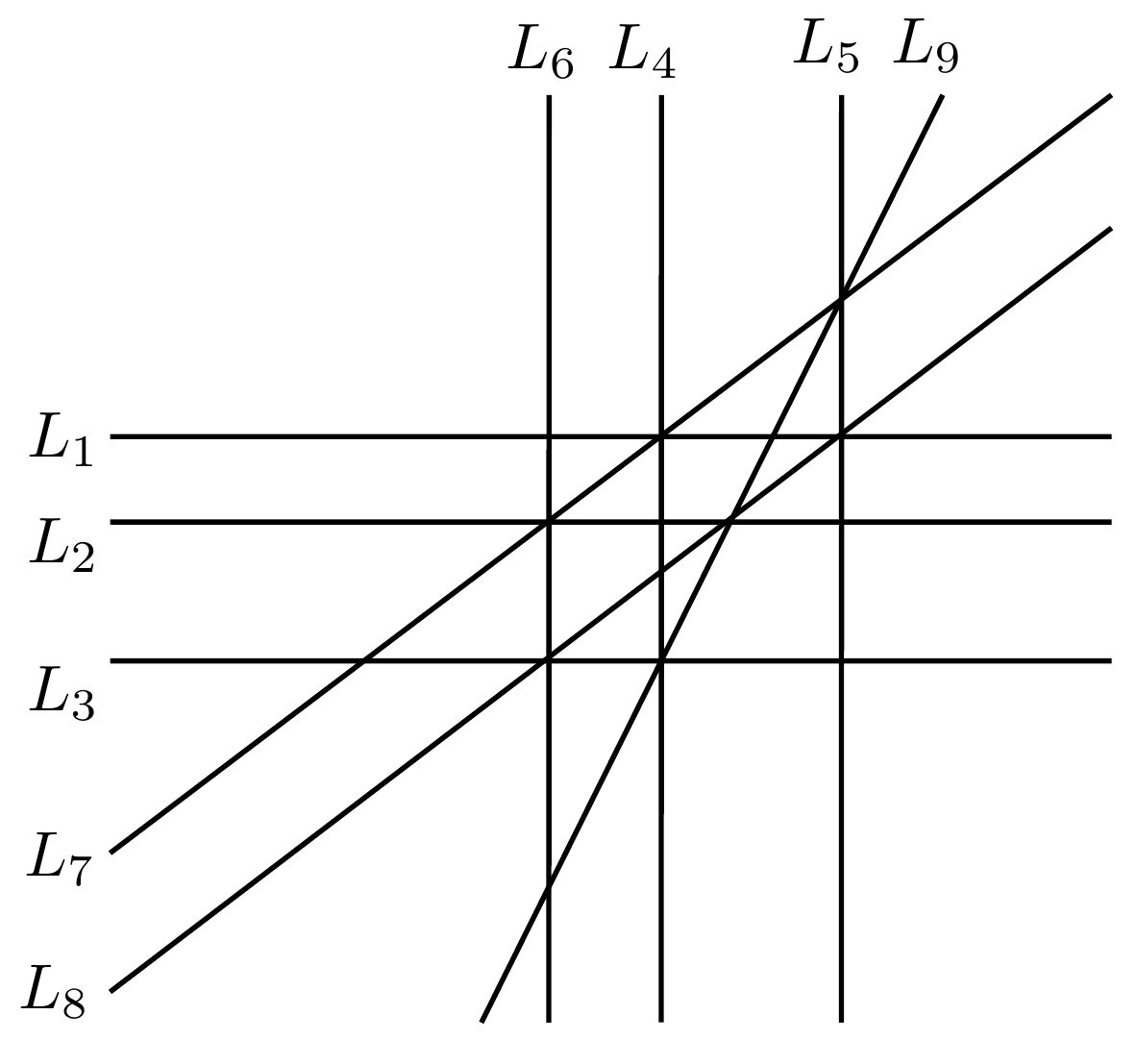}
                $\{1\}^-$
        \end{subfigure}
        \caption{Arrangements in disconnected components of the moduli space $\mathcal{M}_{\{1\}}$.}\label{fig:1AB}
\end{figure}

\begin{figure}[h!]
\includegraphics[height=6cm,width=6cm,keepaspectratio=false]{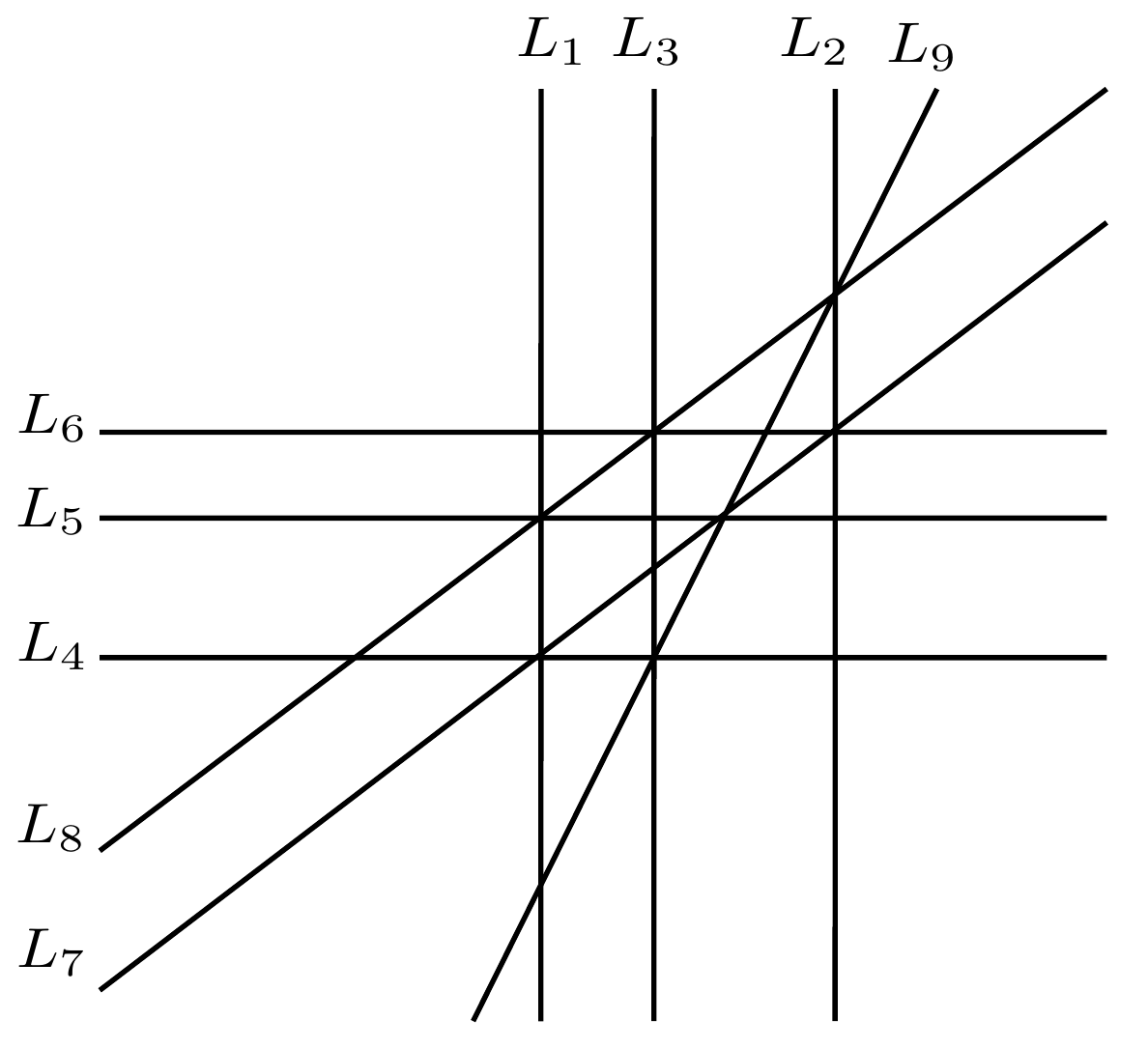}
\centering
\caption{The reflected arrangement $\varphi(\{1\}^+)$.}
\label{fig:1Areflected}
\end{figure}

\begin{theorem}
\label{thm:anna}
The homeomorphism $\varphi$ maps the pair $(\mathbb{C}\mathbb{P}^2,\{1\}^+)$ to the pair $(\mathbb{C}\mathbb{P}^2,\{1\}^-)$.
\end{theorem}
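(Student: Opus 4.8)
The plan is to run Algorithm \ref{algorithm} explicitly for arrangement $\{1\}$, using the generator $\sigma = (L_1\,L_6)(L_2\,L_5)(L_3\,L_4)(L_7\,L_8)$ of $\Aut(\{1\}) = \mathbb{Z}_2$ produced in Proposition \ref{prop:anna}, and then to observe that the coordinates dictated by step (2) make $\varphi$ literally implement $\sigma$. First I would use the $PGL(3,\mathbb{C})$ freedom to normalize the two pairs of lines that $\sigma$ interchanges: following step (2) with $i=1$, $j=2$, I set $L_1=\{x=0\}$, $L_6=\{y=0\}$, $L_2=\{x=z\}$, and $L_5=\{y=z\}$, so that by construction $\varphi$ swaps $L_1\leftrightarrow L_6$ and $L_2\leftrightarrow L_5$. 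The incidences of Table \ref{tab:1} then force the two quadruple points to be $q_1=[0:1:0]$ and $q_2=[1:0:0]$, and hence $L_{10}=\{z=0\}$, the line at infinity, which $\varphi$ fixes in agreement with $\sigma(L_{10})=L_{10}$.

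Next, following step (3), I would parametrize the remaining lines from Table \ref{tab:1} via the Grid Lemma 3.10 of \cite{Fei:10b}. Writing $L_3=\{x=tz\}$ for a free parameter $t$ and reading off the triple points one at a time, the lines $L_7$ (through $e_4=L_2\cap L_6$ and $e_8=L_1\cap L_4$) and $L_8$ (through $e_6=L_3\cap L_6$ and $e_7=L_1\cap L_5$) are determined; imposing that they meet $L_{10}$ at a common infinite point $e_1$ forces $L_4=\{y=z/t\}$. Finally $L_9$ is pinned down by two of its three triple points, and requiring that it also pass through $e_2=L_3\cap L_4$ yields a single quadratic constraint on $t$ (working out to $t^2-3t+1=0$), whose two roots $t^{\pm}$ are the two points $\{1\}^+$ and $\{1\}^-$ of the moduli space.

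The crux, and the step carrying the real content, is step (4): checking that $\varphi$ sends $\{1\}^+$ to $\{1\}^-$. Computing $\varphi(L_i)$ line by line for the $t$-arrangement, I would verify that $\varphi(L_i)=L'_{\sigma(i)}$, where $L'$ denotes the same parametrized family evaluated at $1/t$ in place of $t$. In other words, after relabeling by $\sigma$ the reflection $\varphi$ realizes the parameter inversion $t\mapsto 1/t$ predicted in Remark \ref{rem:inverse}. The decisive feature is that the defining quadratic is \emph{reciprocal} (its constant term equals its leading coefficient), so that $t^+t^-=1$; hence inversion exchanges the two roots and $\varphi(\{1\}^+)=\{1\}^-$ as unions of lines. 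Since $\varphi$ is a coordinate swap it lies in $PGL(3,\mathbb{C})$ and is a biholomorphism of $\mathbb{CP}^2$; carrying the curve $\bigcup \{1\}^+$ onto $\bigcup \{1\}^-$, it is therefore a homeomorphism of pairs $(\mathbb{CP}^2,\{1\}^+)\to(\mathbb{CP}^2,\{1\}^-)$, restricting in particular to a homeomorphism of complements.

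The main obstacle is essentially bookkeeping rather than conceptual: one must keep all the incidences of Table \ref{tab:1} consistent while deriving the parametrization, so that the reciprocal structure of the quadratic emerges cleanly. That reciprocity is exactly what makes the coordinate swap $\varphi$ match the Galois conjugation $t^+\leftrightarrow t^-$ of the two components; were the quadratic not reciprocal, one would expect to need an auxiliary projective rescaling (the ``up to some constant'' of Remark \ref{rem:inverse}) to identify $\varphi(\{1\}^+)$ with $\{1\}^-$. Verifying that no such correction is required here, i.e.\ that the line-by-line match $\varphi(L_i)=L'_{\sigma(i)}$ holds on the nose, is the one point I would check with care.
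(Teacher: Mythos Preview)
Your proposal is correct and follows essentially the same approach as the paper: both run Algorithm~\ref{algorithm} with the generator $\sigma=(L_1\,L_6)(L_2\,L_5)(L_3\,L_4)(L_7\,L_8)$ from Proposition~\ref{prop:anna} and then verify step~(4) line by line. The only difference is the choice made in step~(2): you normalize $(L_1,L_2,L_6,L_5)$ to the grid lines, whereas the paper normalizes $(L_4,L_5,L_3,L_2)$, so you obtain the reciprocal quadratic $t^2-3t+1=0$ (roots $\tfrac{3\pm\sqrt5}{2}$, product $+1$) in place of the paper's $t^2-t-1=0$ (roots $\tfrac{1\pm\sqrt5}{2}$, product $-1$); either normalization makes the check $\varphi(L_i^{+})=L_{\sigma(i)}^{-}$ go through on the nose.
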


\begin{proof}
We apply Algorithm \ref{algorithm}:

(1)  Set $\sigma=$($L_1$ $L_6$)($L_2$ $L_5$)($L_3$ $L_4$)($L_7$ $L_8$)($L_9$)($L_{10}$).

(2)  Set $L_4$, $L_5$, $L_3$, and $L_2$ as $x=0$, $x=z$, $y=0$, and $y=z$, respectively.

(3)  This produces the following equations:
{\red
\begin{eqnarray*}
&L_1: y=-t^{-1}z, \quad L_2: y=z, \quad L_3: y=0, \quad L_4: x=0, \quad L_5: x=z, \quad L_6:x=tz,&\\
&L_7: y=x-t^{-1}z, \quad L_8: y=x-tz, \quad L_9: y=(1-t^{-1})x, \quad L_{10}: z=0.&
\end{eqnarray*}
}
where $t^{\pm}:=\frac{1\pm\sqrt{5}}{2}$. Denote by $L^{\pm}_i$ the lines obtained from the above equations by setting $t=t^{\pm}$. Then $\{1\}^+=\{L^+_1,\dots, L^+_{10}\}$ and $\{1\}^-=\{L^-_1,\dots, L^-_{10}\}$.

Figure \ref{fig:1AB} displays the real section of the restriction of $\{1\}^+$ and $\{1\}^-$ to an affine chart obtained by {\green choosing $L_{10}$, the line that passes through the two quadruples in both arrangements, to be the line $z=0$ at infinity}.

(4) {\green Clearly} the depicted real graphs are symmetric and can be obtained from one another by applying the reflection $\varphi$. Moreover, {\green switching} $x$ and $y$ in the defining equations proves that $\varphi(L^+_i)=L^-_{\sigma(i)}$ which shows that the reflection $\varphi$ corresponds to the combinatorial symmetry $\sigma$, {\red
where $t^+\mapsto t^-$ because $(t^+)(t^{-})=-1$.  See Figures  \ref{fig:1AB} and \ref{fig:1Areflected} for a geometric check.}
\end{proof}



\medskip

\textbf{Example:  Arrangement $\{6\}$. }  In Table \ref{tab:6} we show the configuration table for the triples as given in \cite{Fei:10b}.
\begin{table}[h]
\begin{tabular}{|ccc|ccc|ccc|c|}
\hline
$L_1$ &	$L_2$ &	$L_3$ & $L_4$ &	$L_5$ &	$L_6$ & $L_7$ &	$L_8$ & $L_9$ & $L_{10}$ \\
\hline
$e_1$ & $e_1$ & $e_1$ & $e_8$	&	$e_8$ & $e_8$ & $e_9$ & $e_9$	&	$e_9$ & $A$ \\
$e_2$ & $e_4$ & $e_6$ & $e_2$	&	$e_5$ & $e_3$ & $e_2$ & $e_4$	&	$e_3$ & $C$ \\
$e_3$ & $e_5$ & $e_7$ & $e_4$ &	$e_6$ & $e_7$ & $e_5$ & $e_7$	&	$e_6$ & $G$ \\
$G$  &   & $C$  & $A$  &	$G$  &   & $C$  &  	&	$A$  &   \\
\hline 
\end{tabular}
\caption{A configuration table for the triples of the arrangement $\{6\}$.}
\label{tab:6}
\end{table}

\begin{proposition}
\label{prop:combsym93iiiACG}
For the case $\{6\}$ from \cite{Fei:10b}, the group of symmetries $\Aut(\{6\})$ is $\mathbb{Z}_2$.
\end{proposition}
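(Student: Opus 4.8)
The plan is to determine $\Aut(\{6\})$ by the same incidence-chasing method used in Proposition \ref{prop:anna}: I fix a handful of distinguished lines and points using lattice invariants, then show that the remaining freedom collapses to exactly two automorphisms. The first observation is that every point recorded in Table \ref{tab:6} is a triple point (no point lies on four lines), so the basic invariant is the number of triple points on each line. This splits the ten lines into the six lines $L_1,L_3,L_4,L_5,L_7,L_9$ lying on four triple points each and the four lines $L_2,L_6,L_8,L_{10}$ lying on exactly three; any $\tau\in\Aut(\{6\})$ must preserve each of these two sets.

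Next I would refine the family of three-point lines by recording, for each, how many \emph{other} three-point lines it meets at a triple point. Reading the table off directly, $L_2$ and $L_6$ each meet exactly one other three-point line (namely $L_8$, at $e_4$ and at $e_7$ respectively), $L_8$ meets two (both $L_2$ and $L_6$), and $L_{10}$ meets none. Hence $\tau$ must fix both $L_{10}$ and $L_8$ and can at most swap the pair $\{L_2,L_6\}$. Fixing $L_{10}$ forces $\tau$ to permute its three triple points $\{A,C,G\}$; fixing $L_8$, together with the observation that $e_9$ is the unique triple point on $L_8$ whose two other lines ($L_7,L_9$) are both four-point lines, gives $\tau(e_9)=e_9$, while $\{e_4,e_7\}$ is permuted in lockstep with $\{L_2,L_6\}$.

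The heart of the argument is then a two-branch propagation. If $\tau$ fixes $L_2$ and $L_6$, then $e_4=L_2\cap L_8$ and $e_7=L_6\cap L_8$ are fixed, which successively forces $L_4$ and $L_3$, then the points $A,C,G$, then $L_9$ and $L_7$, then $e_2=L_4\cap L_7$, and hence $L_1$ and $L_5$ to be fixed; every line is then fixed, so $\tau=\mathrm{id}$. If instead $\tau$ swaps $L_2$ and $L_6$, the same incidence relations, run in the swapped order, pin $\tau$ down uniquely as $\sigma=(L_1\,L_5)(L_2\,L_6)(L_3\,L_4)(L_7\,L_9)$ fixing $L_8$ and $L_{10}$, with $A\leftrightarrow C$, $G$ fixed, and $(e_1\,e_8)(e_2\,e_6)(e_3\,e_5)(e_4\,e_7)$ on the inner points. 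I would close by checking that this $\sigma$ genuinely respects every incidence in Table \ref{tab:6}, including the promoted points $A,C,G$ on $L_{10}$. Since the two branches are exhaustive and each yields a single map, $\Aut(\{6\})=\{\mathrm{id},\sigma\}\cong\mathbb{Z}_2$. (Alternatively one could note that $L_1,\dots,L_9$ and $e_1,\dots,e_9$ form a $9_3$ configuration and compute the stabilizer of the three promoted pairs inside its automorphism group, but the direct chase keeps the exposition parallel to Proposition \ref{prop:anna}.)

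The main obstacle here is bookkeeping rather than conceptual difficulty: I must ensure the chain of forced incidences in each branch is complete, leaving no line or point undetermined, and that the two branches are genuinely exhaustive so that no third automorphism can slip through. In particular, verifying that the swapping branch closes up consistently — that the candidate $\sigma$ is an actual lattice isomorphism and not merely a partial matching — is the step most prone to error, and the one I would check incidence by incidence.
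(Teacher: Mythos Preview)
Your proof is correct and follows essentially the same route as the paper: both arguments first isolate the four lines $\{L_2,L_6,L_8,L_{10}\}$ carrying exactly three triples, then distinguish $L_8$ and $L_{10}$ from the swappable pair $\{L_2,L_6\}$ (you count neighboring three-point lines, the paper notes that $e_4,e_7$ are the only triples shared among these four lines --- equivalent observations), fix $e_9$, and then propagate through the two branches to obtain exactly $\mathrm{id}$ and $\sigma=(L_1\,L_5)(L_2\,L_6)(L_3\,L_4)(L_7\,L_9)$. Your writeup is somewhat more explicit about the propagation chain in each branch, but the underlying argument is the same.
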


\begin{proof}
Any symmetry of the arrangement must respect the lines that contain exactly three triples, and thus it must fix $\{L_2,L_6,L_8,L_{10}\}$ setwise.  There are two common triples $e_4,e_7$ amongst these lines, and so the line $L_8$ which contains both of these and the line $L_{10}$ which contains neither of these must both be fixed.  This also fixes the triple $e_9$ on the line $L_8$.

Aside from the identity on this set, this also gives the transposition ($e_4$ $e_7$)($L_2$ $L_6$), which must transpose the sets $\{e_1,e_5\}$ and $\{e_3,e_8\}$.  By the triples $e_4,e_7$ on two other lines, this transposition also gives ($L_4$ $L_3$), which must transpose the sets $\{e_8,e_2,e_A\}$ and $\{e_1,e_6,e_C\}$.  This confirms that the transposition must include both ($e_1$ $e_8$) and ($e_A$ $e_C$), the latter since the line $L_{10}$ is fixed.  This of course fixes $e_G$ and must include the transpositions ($e_2$ $e_6$) and ($e_3$ $e_5$).

Thus the transposition gives ($L_1$ $L_5$)($L_2$ $L_6$)($L_3$ $L_4$)($L_7$ $L_9$) while fixing the lines $L_8$ and $L_{10}$.

Lastly we consider the pointwise identity on the set $\{L_2,L_6,L_8,L_{10}\}$ from above.  This fixes the triples $e_4,e_7$, which also fixes $e_1,e_8$ followed by $e_A,e_C$ and $e_2,e_6$ and $e_3,e_5$ according to the same argument used above for the transposition.

This gives the identity, and thus there are no other symmetries.
\end{proof}


\begin{theorem}
\label{thm:6}
The homeomorphism $\varphi$ maps the pair $(\mathbb{C}\mathbb{P}^2,\{6\}^+)$ to the pair $(\mathbb{C}\mathbb{P}^2,\{6\}^-)$.
\end{theorem}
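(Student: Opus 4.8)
The plan is to follow Algorithm \ref{algorithm} verbatim, exactly as in the proof of Theorem \ref{thm:anna}, now feeding in the $\mathbb{Z}_2$ symmetry produced in Proposition \ref{prop:combsym93iiiACG}. First I would fix the generator $\sigma = (L_1\ L_5)(L_2\ L_6)(L_3\ L_4)(L_7\ L_9)$ of $\Aut(\{6\})$, which fixes the indices of $L_8$ and $L_{10}$. Since this $\sigma$ acts by four transpositions, Step (2) can be carried out by selecting two of the transposed pairs, for instance $\{L_3,L_4\}$ and $\{L_1,L_5\}$, and placing the lines $L_3, L_1, L_4, L_5$ at $x=0$, $x=z$, $y=0$, $y=z$ respectively. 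These choices satisfy the hypotheses of Step (2), and they are made precisely so that $\varphi: x \leftrightarrow y$ visibly swaps $x=0 \leftrightarrow y=0$ and $x=z \leftrightarrow y=z$, hence realizes the transpositions $(L_3\ L_4)$ and $(L_1\ L_5)$ on these four coordinate lines by construction. The genuine content of the theorem is then to show that $\varphi$ respects $\sigma$ on the remaining six lines as well.

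Next I would run Step (3). Using the Grid Lemma 3.10 of \cite{Fei:10b} together with the incidences recorded in Table \ref{tab:6}, I would solve for the defining equations of the remaining lines $L_2, L_6, L_7, L_8, L_9, L_{10}$ in terms of a single parameter $t$. Imposing the collinearities forced by the triple points and by the marked points $A, C, G$ lying on $L_{10}$ should, after elimination, reduce to a single quadratic equation in $t$. Its two roots $t^{\pm}$ then give the two disconnected components, so that $\{6\}^+ = \{L_1^+,\dots,L_{10}^+\}$ and $\{6\}^- = \{L_1^-,\dots,L_{10}^-\}$, and I would record all ten equations explicitly, as was done in Theorem \ref{thm:anna}.

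Finally, for Step (4) I would substitute $x \leftrightarrow y$ into each equation defining $L_i^+$ and verify that the outcome is exactly the equation for $L_{\sigma(i)}^-$. As in Theorem \ref{thm:anna} and as anticipated in Remark \ref{rem:inverse}, I expect the swap of $x$ and $y$ to interchange the two components by realizing the Galois conjugation $t^+ \mapsto t^-$ of the governing quadratic, most plausibly through an inversion $t^{\pm} \mapsto (t^{\mp})^{-1}$ up to a constant coming from the product of the roots; establishing this identity on the parameter is the crux of the argument, and it can be double-checked geometrically against the symmetric real pictures of $\{6\}^+$ and $\{6\}^-$.

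The step I expect to be the main obstacle is Step (3): correctly extracting the parametrized equations and the associated quadratic from the Grid Lemma. A poor choice of which transposed pairs are sent to the coordinate lines, or of their orientation among $x=0$, $x=z$, $y=0$, $y=z$, can obscure the symmetry and force the inverse relation on $t$ to surface in an unworkably messy form; a well-adapted choice of coordinates is what makes the final verification clean. Once the defining equations are in hand, Step (4) reduces to a direct symbolic substitution together with the geometric check, entirely parallel to the proof of Theorem \ref{thm:anna}.
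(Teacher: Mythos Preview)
Your proposal is correct and follows exactly the same approach as the paper: both run Algorithm~\ref{algorithm} with the generator $\sigma=(L_1\ L_5)(L_2\ L_6)(L_3\ L_4)(L_7\ L_9)$, solve for a one-parameter family governed by a quadratic in $t$, and verify $\varphi(L_i^+)=L_{\sigma(i)}^-$ via the relation $t^+t^-=-1$. The only difference is cosmetic: in Step~(2) the paper places $L_4,L_6,L_3,L_2$ at $x=0,\,x=z,\,y=0,\,y=z$ (using the transposed pairs $\{L_3,L_4\}$ and $\{L_2,L_6\}$), whereas you propose using $\{L_3,L_4\}$ and $\{L_1,L_5\}$; both choices satisfy the hypotheses of Step~(2) and lead to the same verification.
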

\begin{proof}
We apply Algorithm \ref{algorithm}:

(1)  Set $\sigma=$($L_1$ $L_5$)($L_2$ $L_6$)($L_3$ $L_4$)($L_7$ $L_9$)($L_8$)($L_{10}$).

(2)  Set $L_4$, $L_6$, $L_3$, and $L_2$ as $x=0$, $x=z$, $y=0$, and $y=z$, respectively.

(3)  This produces the following equations:
{\red
\begin{eqnarray*}
&L_1: y=-t^{-1}z, \quad L_2: y=z, \quad L_3: y=0, \quad L_4: x=0, \quad L_5: x=tz, \quad L_6:x=z,&\\
&L_7: y=\frac{1+t^{-1}}{t}x-t^{-1}z, \quad L_8: y=-x+z, \quad L_9: y=\frac{t^{-1}}{t-1}(x-tz), \quad L_{10}: y=\frac{1+t^{-1}}{t-1}x-\frac{1}{t-1},&
\end{eqnarray*}
}
where $t^2+t-1=0$, or $t^{\pm}=\frac{-1\pm\sqrt{5}}{2}$.   Denote by $L^{\pm}_i$ the lines obtained  by setting $t=t^{\pm}$. Then $\{1\}^+=\{L^+_1,\dots, L^+_{10}\}$ and $\{1\}^-=\{L^-_1,\dots, L^-_{10}\}$.

{\red
(4) It is easy to check that $\varphi(L^+_i)=L^-_{\sigma(i)}$, where $t^+\mapsto t^-$ because $(t^+)(t^{-})=-1$.  See Figures \ref{fig:6AB} and \ref{fig:6Areflected} for a geometric check.
}
\end{proof}

\begin{figure}[h!]
        \begin{subfigure}{0.4\textwidth}
               \centering
                \includegraphics[width=\textwidth]{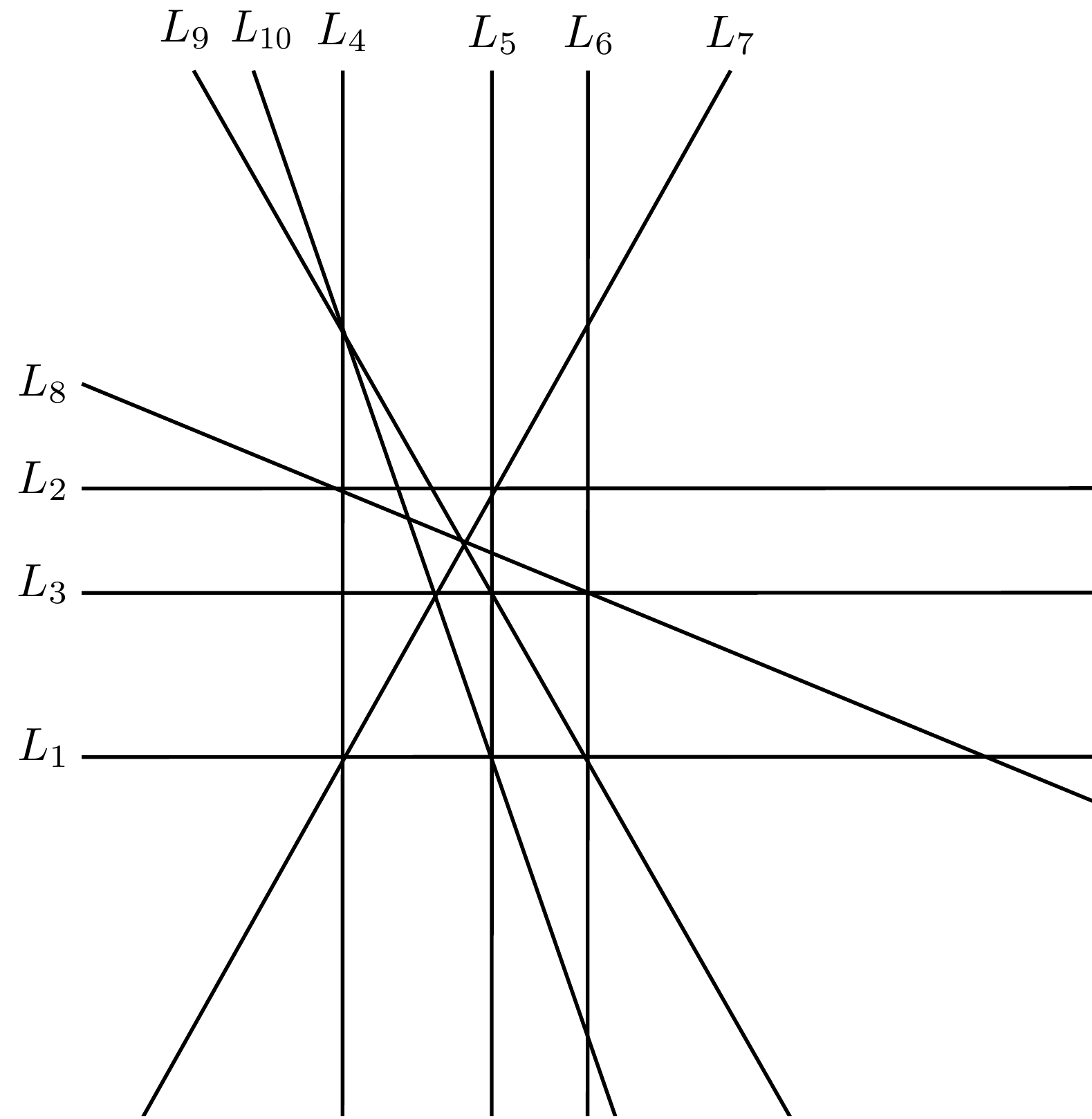}
                $\{6\}^+$
        \end{subfigure} 
\qquad \qquad
        \begin{subfigure}{0.4\textwidth}
                \centering
                \includegraphics[width=\textwidth]{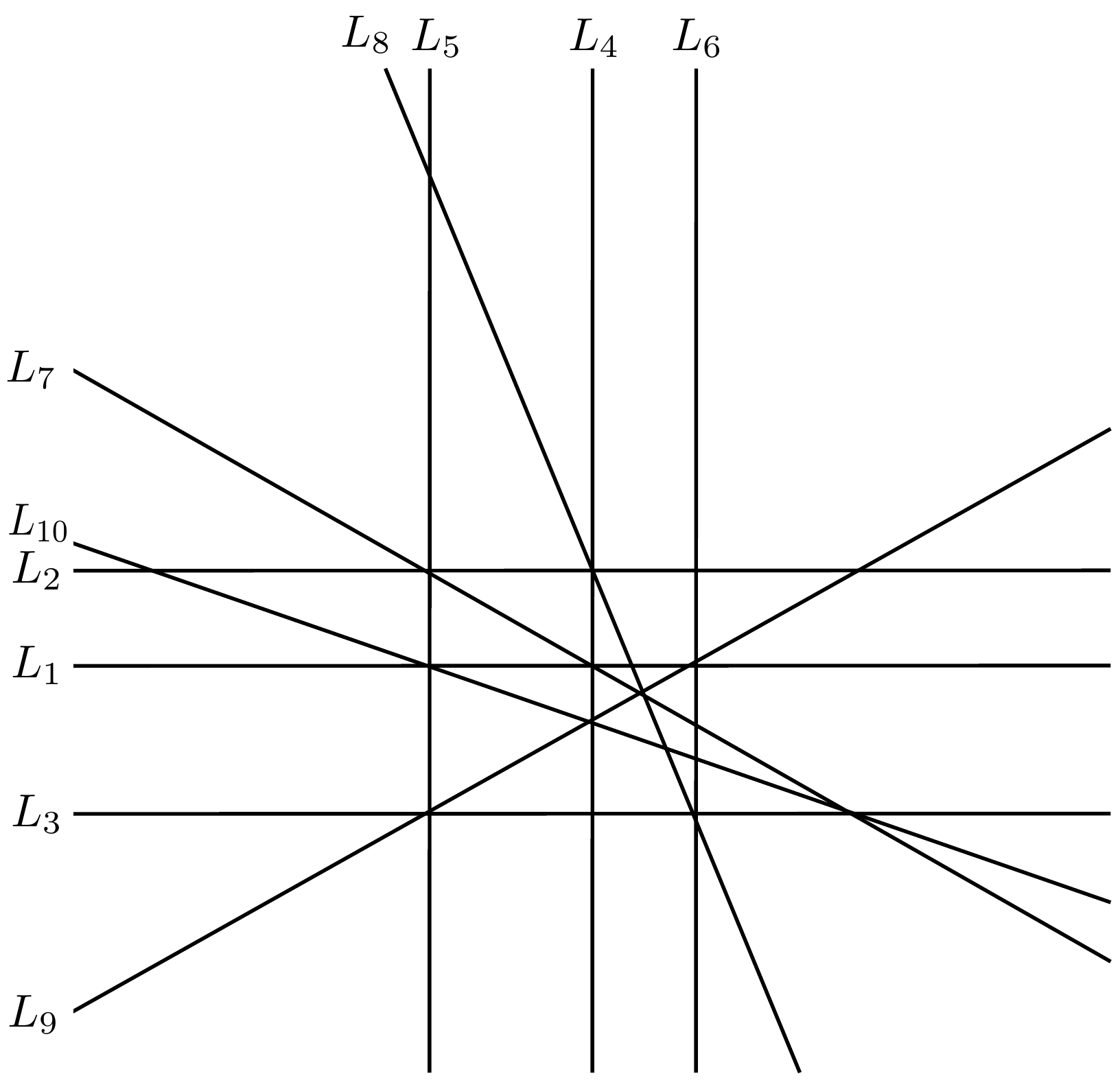}
                $\{6\}^-$
        \end{subfigure}
        \caption{Arrangements in disconnected components of the moduli space $\mathcal{M}_{\{6\}}$.}\label{fig:6AB}
\end{figure}
\begin{figure}[h!]
\includegraphics[height=6cm,width=6cm,keepaspectratio=false]{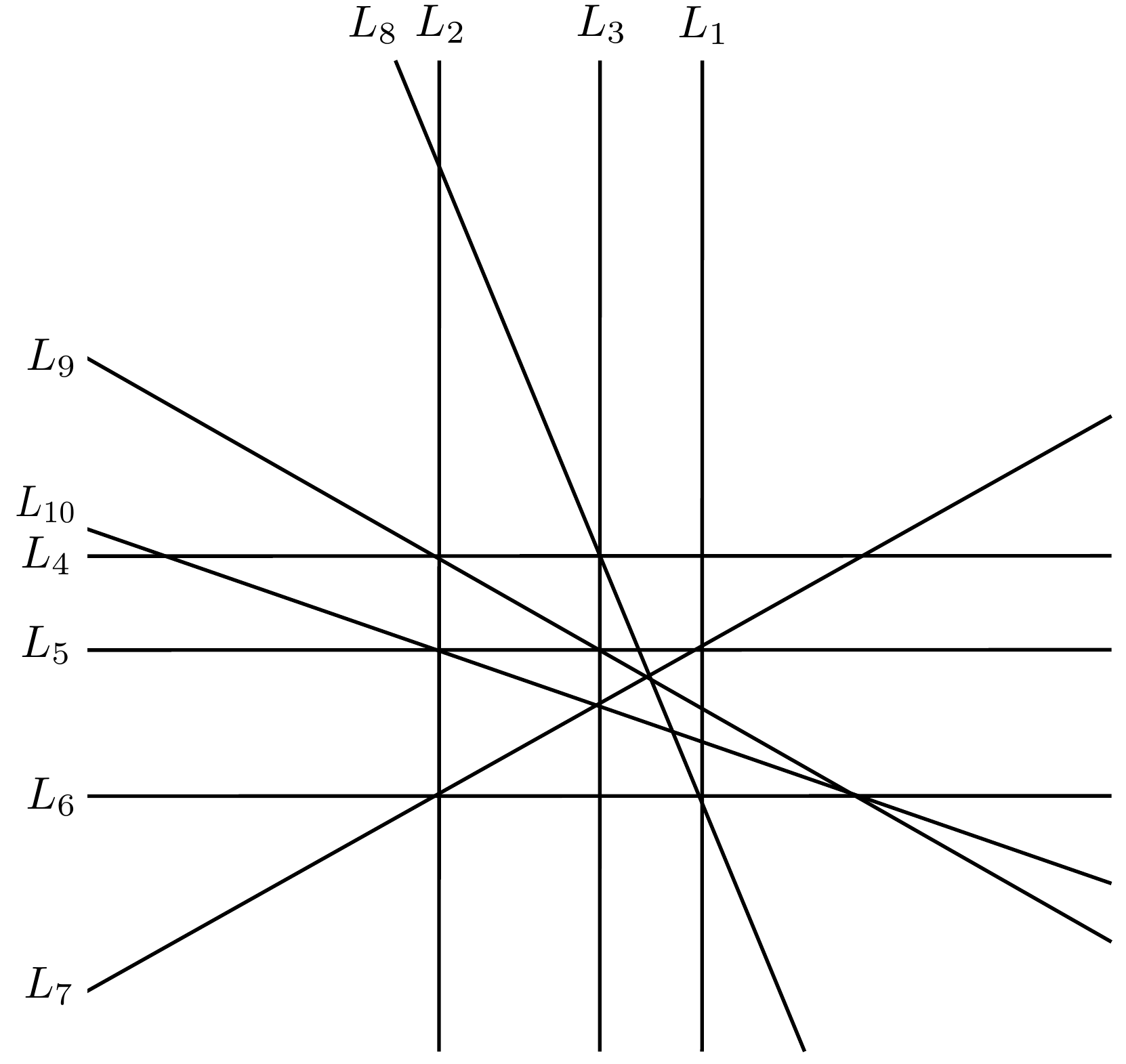}
\centering
\caption{The reflected arrangement $\varphi(\{6\}^+)$.}
\label{fig:6Areflected}
\end{figure}


\textbf{Example:  Arrangement $\{7\}$. }  In Table \ref{tab:7} we show the configuration table for the triples as given in \cite{Fei:10b}.
\begin{table}[h]
\begin{tabular}{|ccc|ccc|ccc|c|}
\hline
$L_1$ &	$L_2$ &	$L_3$ & $L_4$ &	$L_5$ &	$L_6$ & $L_7$ &	$L_8$ & $L_9$ & $L_{10}$ \\
\hline
$e_1$ & $e_1$ & $e_1$ & $e_8$	&	$e_8$ & $e_8$ & $e_9$ & $e_9$	&	$e_9$ & $B$ \\
$e_2$ & $e_4$ & $e_6$ & $e_2$	&	$e_5$ & $e_3$ & $e_2$ & $e_4$	&	$e_3$ & $D$ \\
$e_3$ & $e_5$ & $e_7$ & $e_4$ &	$e_6$ & $e_7$ & $e_5$ & $e_7$	&	$e_6$ & $F$ \\
  & $F$  & $B$  & $B$  &	  & $D$  & $D$  &  	&	$F$  &   \\
\hline
\end{tabular}
\caption{A configuration table for the triples of the arrangement $\{7\}$.}
\label{tab:7}
\end{table}

\begin{proposition}
\label{prop:combsym93iiiBDF}
For the case $\{7\}$ from \cite{Fei:10b}, the group of symmetries $\Aut(\{7\})$ is $S_4$.
\end{proposition}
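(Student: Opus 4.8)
The plan is to read off the incidence structure recorded in Table~\ref{tab:7} and analyze it combinatorially, as in Propositions~\ref{prop:anna} and \ref{prop:combsym93iiiACG}, but since the group now has order $24$ I would organize the symmetries through their action on a distinguished four-element set rather than enumerating them one at a time. The first step is to sort the lines by how many triples they carry: the lines $L_1,L_5,L_8,L_{10}$ each lie on exactly three triples, while $L_2,L_3,L_4,L_6,L_7,L_9$ each lie on four. Any lattice automorphism $\tau$ preserves these two sets, so restriction gives a homomorphism $\rho\colon \Aut(\{7\})\to \mathrm{Sym}\{L_1,L_5,L_8,L_{10}\}\cong S_4$, and it then suffices to show $\rho$ is both injective and surjective.

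For injectivity I would exploit the ``opposite'' relation on the six four-fold lines, namely that two such lines are opposite when they never share a triple; from Table~\ref{tab:7} this yields exactly the three pairs $\{L_2,L_6\}$, $\{L_3,L_7\}$, $\{L_4,L_9\}$. Moreover each light line $L_i$ splits the six heavy lines into three pairs (the heavy lines of its three triples), and one checks that these four pairings are the four perfect matchings of a $1$-factorization of the adjacency graph $K_{2,2,2}$ on the heavy lines. An automorphism $\tau\in\ker\rho$ fixes all four light lines, hence preserves each of these matchings and the opposite-pair relation; a short finite deduction—if $\tau$ moved one heavy line, tracking its partners across two of the matchings would violate either injectivity of $\tau$ or the opposite-pair relation—forces $\tau$ to fix every heavy line. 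Thus $\ker\rho$ is trivial and $|\Aut(\{7\})|\le 24$.

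For surjectivity I would realize a generating set of $S_4$ by explicit automorphisms. One generator is essentially free: the $9_3$-core $e_1,\dots,e_9$ on $L_1,\dots,L_9$ of arrangement $\{7\}$ is identical to that of arrangement $\{6\}$, so the symmetry $\sigma=(L_1\,L_5)(L_2\,L_6)(L_3\,L_4)(L_7\,L_9)$ of Proposition~\ref{prop:combsym93iiiACG} is available, and a direct check shows it also permutes the three extra triples $B,D,F$ (fixing $B$ and swapping $D$ with $F$); its image under $\rho$ is the transposition $(L_1\,L_5)$. I would then produce a second automorphism whose image under $\rho$ moves $L_{10}$—say a transposition $(L_1\,L_{10})$ or a $4$-cycle—by searching among the symmetries of the $9_3$-core compatible with the placement of $L_{10}$ on the three double points $L_3\cap L_4$, $L_6\cap L_7$, $L_2\cap L_9$; together with $\sigma$ this generates all of $S_4$. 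Alternatively one may finish conceptually by recognizing that the four light-line matchings form one of the two $S_4$-orbits among the eight perfect matchings of $K_{2,2,2}$ (the two tetrahedra inscribed in the cube of matchings), so that the stabilizer is exactly the rotational octahedral group $\cong S_4$.

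The main obstacle is surjectivity: the degree bookkeeping and the injectivity argument are routine finite checks, but filling out all $24$ elements of $S_4$ requires either a careful search through the symmetries of the underlying $9_3$ configuration or the structural identification with the octahedral symmetry, and in either case one must verify that the candidate permutations respect not only the nine core triples but also the three triples $B,D,F$ contributed by $L_{10}$.
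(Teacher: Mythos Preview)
Your approach is correct and runs closely parallel to the paper's, though organized differently. Both proofs begin by isolating the four ``light'' lines $L_1,L_5,L_8,L_{10}$ (those on exactly three triples) and both exploit the same graph on the six ``heavy'' lines---the paper draws it explicitly as a figure with six vertices and twelve edges and observes, as you do, that the three missing edges $L_2\cap L_6$, $L_3\cap L_7$, $L_4\cap L_9$ are doubles. Your homomorphism $\rho$ and its injectivity correspond to the paper's final paragraph, where fixing all four light lines is shown to force the identity; your surjectivity corresponds to the paper's explicit exhibition of generators. The difference is mainly in packaging: the paper first builds the full $S_3$ stabilizing $L_{10}$ by reading reflection and rotation symmetries directly off the hexagonal picture---$(L_5\,L_8)(L_2\,L_3)(L_4\,L_9)(L_6\,L_7)$ and $(L_2\,L_4\,L_7)(L_3\,L_6\,L_9)$---and then adds a single transposition $(L_2\,L_6)(L_3\,L_9)(L_4\,L_7)(L_8\,L_{10})$ moving $L_{10}$, whereas you aim for two generators whose $\rho$-images generate $S_4$.

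The only genuine soft spot is the one you flag yourself: surjectivity. You produce $\sigma$ with $\rho(\sigma)=(L_1\,L_5)$ cleanly, but the second generator is left as ``searching among the symmetries of the $9_3$-core,'' and the alternative octahedral argument (identifying the four light-line matchings with one $S_4$-orbit of perfect matchings of $K_{2,2,2}$) would need an explicit check that \emph{every} automorphism of $K_{2,2,2}$ preserving this $1$-factorization actually lifts to a lattice automorphism of $\{7\}$---a priori it only embeds $\Aut(\{7\})$ into that stabilizer. The paper sidesteps this by simply writing down enough explicit permutations; doing the same would complete your argument with no further ideas needed.
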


\begin{proof}
First observe that there are exactly four lines $L_1,L_5,L_8,L_{10}$ that contain exactly three triples.  Then observe that no pair of these lines contains a common triple.  Thus we may consider symmetries that are subsets of $S_4$ that preserve these lines.

We refer to a graphical interpretation of $\{7\}$ in Figure \ref{fig:93iiiBDFsym} with six lines as its vertices and twelve triples as its edges.  This graph can be reflected and rotated to show $S_3$ symmetry while fixing the line $L_{10}$ (which contains the elements $B,D,F$ bolded in the figure).  One can see both the reflection (through a vertical line through $1$ and $D$) giving ($B$ $F$) and ($L_5$ $L_8$)($L_2$ $L_3$)($L_4$ $L_9$)($L_6$ $L_7$) as well as the cyclic rotation ($B$ $D$ $F$) and ($L_2$ $L_4$ $L_7$)($L_3$ $L_6$ $L_9$).

\begin{figure}[htbp]
\begin{center}
\begin{tikzpicture}
\draw (2,0) node[right] {$L_4$};
\draw (-2,0) node[left] {$L_9$};
\draw (1,1.6) node[above right] {$L_3$};
\draw (-1,1.6) node[above left] {$L_2$};
\draw (1,-1.6) node[below right] {$L_6$};
\draw (-1,-1.6) node[below left] {$L_7$};

\draw[ultra thick] (2,0) -- (1,1.6);
\draw (1.6,1) node[right] {$B$};
\draw[ultra thick] (-1,-1.6) -- (1,-1.6);
\draw (0,-1.6) node[below] {$D$};
\draw[ultra thick] (-2,0) -- (-1,1.6);
\draw (-1.6,1) node[left] {$F$};

\draw[gray] (-1,1.6) -- (1,1.6);
\draw (0,1.6) node[above] {$1$};
\draw[gray] (2,0) -- (-1,-1.6);
\draw (.4,-.8) node[above] {$2$};
\draw[gray] (-2,0) -- (1,-1.6);
\draw (-.4,-.8) node[above] {$3$};

\draw[dotted] (-2,0) -- (1,1.6);
\draw (-.4,.8) node[below] {$6$};
\draw[dotted] (2,0) -- (1,-1.6);
\draw (1.6,-1) node[right] {$8$};
\draw[dotted] (-1,1.6) -- (-1,-1.6);
\draw (-1,0) node[right] {$5$};

\draw[dashed] (2,0) -- (-1,1.6);
\draw (.4,.8) node[below] {$4$};
\draw[dashed] (-2,0) -- (-1,-1.6);
\draw (-1.6,-1) node[left] {$9$};
\draw[dashed] (1,1.6) -- (1,-1.6);
\draw (1,0) node[left] {$7$};

\end{tikzpicture}
\end{center}
\caption{A depiction of $\{7\}$ showing its symmetries:  the graph has six lines as its vertices and twelve triples as its edges.  Four lines are not shown: the solid triples lie on $L_1$, the dotted triples lie on $L_5$, the dashed triples lie on $L_8$, and the bolded triples lie on $L_{10}$.}
\label{fig:93iiiBDFsym}
\end{figure}
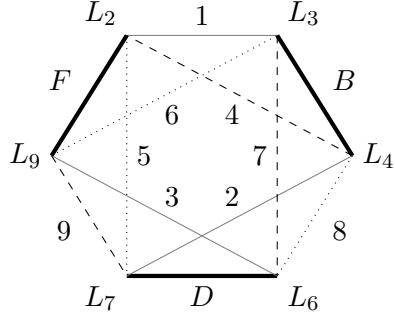

Furthermore, because the three edges missing from the complete graph $K_6$ on six vertices correspond to doubles, $L_2\cap L_6$, $L_3\cap L_7$, and $L_4\cap L_9$, and not triples, there can be no additional symmetries with the line $L_{10}$ fixed. 

In order to produce $S_4$ we have left to produce a transposition taking $L_{10}$ to one of the other three lines, say $L_8$.  One can take the symmetry ($L_2$ $L_6$)($L_3$ $L_9$)($L_4$ $L_7$)($L_8$ $L_{10}$) fixing the lines $L_1$ and $L_5$.

Now to see that there are no other symmetries, we fix each of the four lines $L_1$, $L_5$, $L_8$, and $L_{10}$ and consider where the triple $e_1$ might be sent amongst the triples $e_1$, $e_2$, and $e_3$.  We consider the remaining four lines containing either the triple $e_1$ or its image.  If $e_1$ is not sent to itself, then in both cases the triples on one of the original four lines behave with $\mathbb{Z}_3$ symmetry while the triples on the other three of the original four lines behave with $\mathbb{Z}_2$ symmetry:  this results in a contradiction for the two final lines.  If the triple $e_1$ is sent to itself, then the $\mathbb{Z}_2$ symmetries of the triples on the four original lines are not compatible with each other, also leading to a contradiction.
%
\end{proof}

\begin{theorem}
\label{thm:7}
The homeomorphism $\varphi$ maps the pair $(\mathbb{C}\mathbb{P}^2,\{7\}^+)$ to the pair $(\mathbb{C}\mathbb{P}^2,\{7\}^-)$.
\end{theorem}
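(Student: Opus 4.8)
The plan is to apply Algorithm \ref{algorithm} exactly as in the proofs of Theorems \ref{thm:anna} and \ref{thm:6}, since $\{7\}$ fits the same template: its moduli space is again a disconnected pair of points $\{7\}^+$ and $\{7\}^-$, and by Proposition \ref{prop:combsym93iiiBDF} its automorphism group $S_4$ contains several $\mathbb{Z}_2$ subgroups from which to draw a reflecting involution $\sigma$. The whole argument will reduce to choosing the right $\sigma$, normalizing four lines, solving for a one-parameter family of equations, and checking that the coordinate swap $\varphi$ carries one component to the other.

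For step (1), I would take the involution already produced in the proof of Proposition \ref{prop:combsym93iiiBDF}, namely the reflection $\sigma = (L_2\ L_3)(L_4\ L_9)(L_5\ L_8)(L_6\ L_7)$ that fixes $L_1$ and $L_{10}$. This is the natural analogue of the earlier cases, in which the distinguished line $L_{10}$ is fixed by $\sigma$. For step (2), I would select a pair interchanged by $\sigma$, say $L_i = L_4$ and $L_j = L_5$, so that $L_{\sigma(i)} = L_9$ and $L_{\sigma(j)} = L_8$ are distinct from $L_i$ and $L_j$, and set $L_4$, $L_5$, $L_9$, $L_8$ as $x=0$, $x=z$, $y=0$, $y=z$, respectively. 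With this normalization $\varphi$ sends the lines $x=0$ and $x=z$ to the lines $y=0$ and $y=z$ precisely as prescribed by $\sigma$, which is what makes step (4) have a chance of succeeding.

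For step (3), I would feed this normalization into the Grid Lemma 3.10 of \cite{Fei:10b} and solve the incidence relations for the remaining six lines in terms of a single parameter $t$. By analogy with the other $(9_3)$ case $\{6\}$, I expect the constraints to collapse to a single quadratic, plausibly the golden-ratio relation $t^2 + t - 1 = 0$ with $t^\pm = \frac{-1 \pm \sqrt{5}}{2}$, yielding representatives $\{7\}^\pm = \{L_1^\pm,\dots,L_{10}^\pm\}$. Finally, for step (4), I would substitute $y$ for $x$ and $x$ for $y$ in each defining equation and verify $\varphi(L_i^+) = L_{\sigma(i)}^-$, the Galois conjugation $t^+ \mapsto t^-$ being realized through the inverse relation $t^+ t^- = -1$ flagged in Remark \ref{rem:inverse}.

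The main obstacle is twofold. First, because $\Aut(\{7\})$ is as large as $S_4$, the choice of $\sigma$ is no longer forced, and I must confirm that the particular involution chosen actually carries a geometric reflection; step (4) need not succeed for an arbitrary $\mathbb{Z}_2$ subgroup, and if the first choice fails I would test the conjugate involution $(L_2\ L_6)(L_3\ L_9)(L_4\ L_7)(L_8\ L_{10})$ instead. Second, the algebraic bookkeeping in step (3) is the real work: one must keep the parametrization consistent across all ten lines and confirm that the two roots of the resulting quadratic genuinely index the two moduli points. Once the defining equations are secured, the verification in step (4) is a routine substitution, corroborated by the visible symmetry of the affine pictures.
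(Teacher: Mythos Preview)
Your plan follows exactly the paper's method---apply Algorithm \ref{algorithm}---but with a different choice of data in steps (1)--(3). The paper takes $\sigma=(L_1\ L_5)(L_2\ L_6)(L_3\ L_4)(L_7\ L_9)$ fixing $L_8,L_{10}$, normalizes $L_4,L_5,L_3,L_1$ to $x=0,\ x=z,\ y=0,\ y=z$, and obtains the quadratic $t^2-t-1=0$ (not $t^2+t-1=0$); the check in (4) then goes through with $t^+t^-=-1$. Your involution $(L_2\ L_3)(L_4\ L_9)(L_5\ L_8)(L_6\ L_7)$ is equally legitimate: both yours and the paper's restrict to transpositions on the four distinguished three-triple lines $\{L_1,L_5,L_8,L_{10}\}$, hence lie in the same conjugacy class of $S_4$ and (since the only index-two subgroup of $S_4$ is $A_4$) act the same way on the two-point moduli space. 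So your first choice should succeed without needing the fallback, and the only real correction is the expected quadratic.
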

\begin{proof}
We apply Algorithm \ref{algorithm}:

(1)  Set $\sigma=$($L_1$ $L_5$)($L_2$ $L_6$)($L_3$ $L_4$)($L_7$ $L_9$)($L_8$)($L_{10}$).

(2)  Set $L_4$, $L_5$, $L_3$, and $L_1$ as $x=0$, $x=z$, $y=0$, and $y=z$, respectively.

(3)  This produces the following equations:
\begin{eqnarray*}
&L_1: y=z, \quad L_2: y=-t^{-1}z, \quad L_3: y=0, \quad L_4: x=0, \quad L_5: x=z, \quad L_6:x=tz,&\\
&L_7: y=-tx+z, \quad L_8: y=t^{-2}x-t^{-1}z, \quad L_9: y=t(x-z), \quad L_{10}: y=-x,&
\end{eqnarray*}
where $t^2-t-1=0$, or $t^{\pm}=\frac{1\pm\sqrt{5}}{2}=(-t^\mp)^{-1}$. Denote by $L^{\pm}_i$ the lines obtained  by setting $t=t^{\pm}$. Then $\{1\}^+=\{L^+_1,\dots, L^+_{10}\}$ and $\{1\}^-=\{L^-_1,\dots, L^-_{10}\}$. 


{\red
(4) It is easy to check that $\varphi(L^+_i)=L^-_{\sigma(i)}$, where $t^+\mapsto t^-$ because $(t^+)(t^{-})=-1$.  See Figures \ref{fig:7AB} and \ref{fig:7Areflected} for a geometric check.
}
\end{proof}
\begin{figure}[h!]
        \begin{subfigure}{0.4\textwidth}
               \centering
                \includegraphics[width=\textwidth]{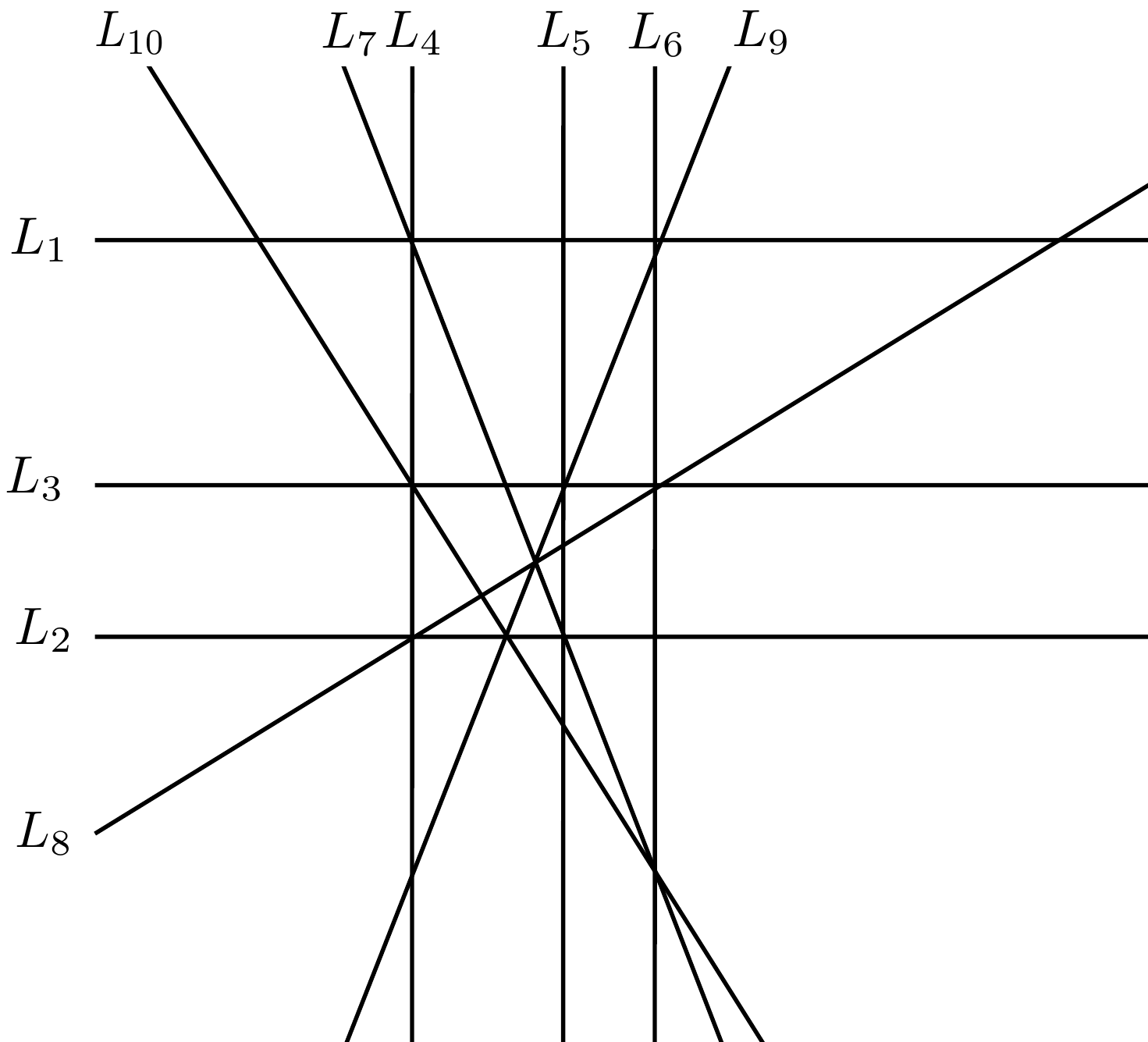}
                $\{7\}^+$
        \end{subfigure} 
\qquad \qquad
        \begin{subfigure}{0.4\textwidth}
                \centering
                \includegraphics[width=\textwidth]{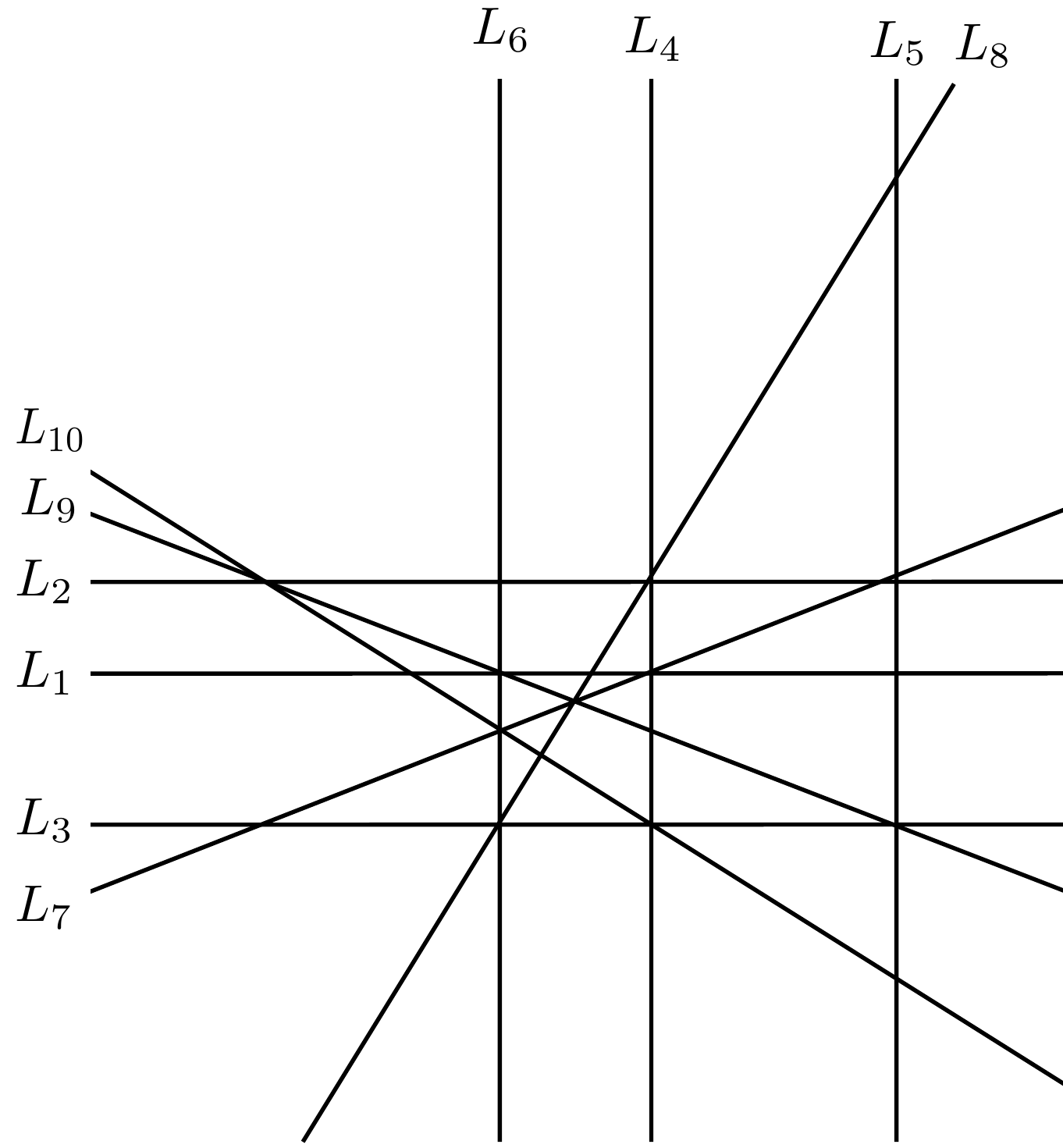}
                $\{7\}^-$
        \end{subfigure}
        \caption{Arrangements in disconnected components of the moduli space $\mathcal{M}_{\{7\}}$.}\label{fig:7AB}
\end{figure}

\begin{figure}[h!]
\includegraphics[height=6cm,width=6cm,keepaspectratio=false]{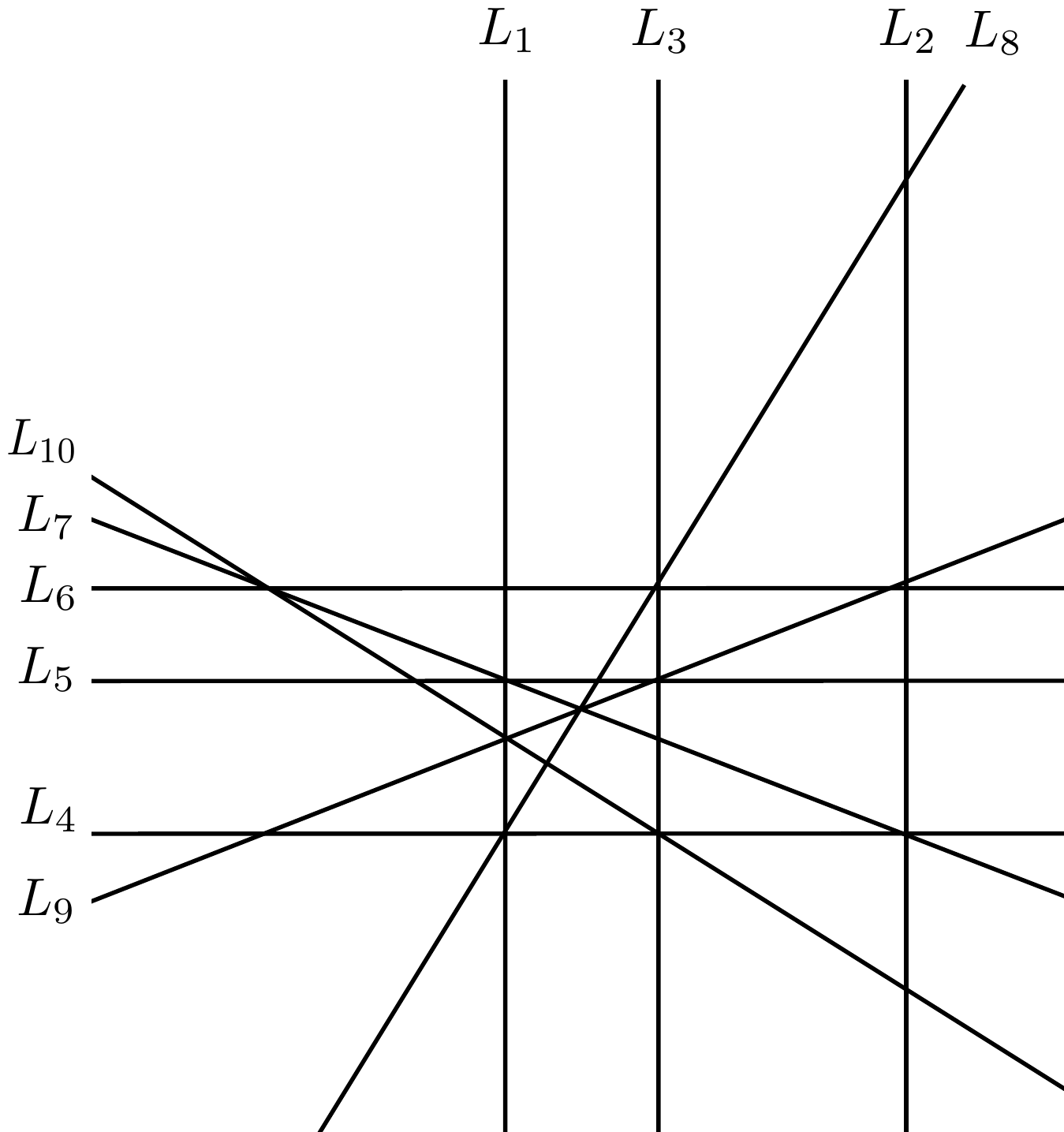}
\centering
\caption{The reflected arrangement $\varphi(\{7\}^+)$.}
\label{fig:7Areflected}
\end{figure}


\section{Application to complex arrangements that are already not Zariski pairs}
\label{sec:complex}

To show the robustness of the algorithm above, we now consider examples that were already found to be complex conjugate and thus not Zariski pairs.

The first subsection considers arrangements from the literature that have this $\mathbb{Z}_2$ symmetry as a subgroup of the automorphism group.  In the second subsection we consider the three arrangements that contain a $\mathbb{Z}_2$ subgroup out of the five complex conjugate arrangements that were produced in \cite{Fei:10b}.




\subsection{Application to complex arrangements appearing in the literature}
\label{subsec:lit}

We consider the MacLane arrangement \cite{Mac} of eight lines and the Nazir-Yoshinaga \cite{NazYos} arrangement of nine lines.  
  We apply our technique to these examples and show that there is some $\mathbb{Z}_2$ symmetry subgroup which yields an obvious reflection.

\medskip

\textbf{Example:  MacLane arrangement. }  We start with the combinatorics of the arrangement obtained from the geometry in Example 4.3 of \cite{NazYos}.  We give a configuration table for the triples in Table \ref{tab:MacLane}.
\begin{table}[h]
\begin{tabular}{|ccc|ccc|cc|}
\hline
$H_1$ &	$H_2$ &	$H_3$ & $H_4$ &	$H_5$ &	$H_6$ & $H_7$ &	$H_8$ \\
\hline
$e_1$ & $e_1$ & $e_1$ & $e_8$	&	$e_8$ & $e_8$ & $e_2$ & $e_3$	\\
$e_2$ & $e_4$ & $e_6$ & $e_2$	&	$e_5$ & $e_3$ & $e_5$ & $e_4$	\\
$e_3$ & $e_5$ & $e_7$ & $e_4$ &	$e_6$ & $e_7$ & $e_7$ & $e_6$	\\
\hline
\end{tabular}
\caption{A configuration table for the triples of the MacLane arrangement.}
\label{tab:MacLane}
\end{table}

As stated in Example 2.3 of \cite{ABC}, the group of symmetries for the MacLane arrangement is GL($2;\mathbb{F}_3$).

\begin{theorem}
\label{thm:MacLane}
{\red The homeomorphism $\widetilde{\varphi}: x\mapsto\overline{y}$, $y\mapsto\overline{x}$, and $z\mapsto\overline{z}$ } maps the pair $(\mathbb{C}\mathbb{P}^2,+)$ to the pair $(\mathbb{C}\mathbb{P}^2,-)$.
\end{theorem}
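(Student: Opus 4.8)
The plan is to run Algorithm \ref{algorithm} exactly as in the proofs of Theorems \ref{thm:anna}, \ref{thm:6}, and \ref{thm:7}, the only difference being that the MacLane arrangement is genuinely complex: its two moduli points are complex conjugates rather than Galois conjugates over $\mathbb{R}$. For this reason the bare reflection $\varphi$ must be replaced by $\widetilde{\varphi}=\varphi$ post-composed with complex conjugation, which is still a homeomorphism of $\mathbb{CP}^2$. Since a homeomorphism of the ambient space restricts to a homeomorphism of complements, it suffices to show that $\widetilde{\varphi}$ carries the line set labeled $+$ onto the line set labeled $-$ with the correct indexing.

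First I would pick an order-two element $\sigma$ inside $\Aut(\text{MacLane})=GL(2;\mathbb{F}_3)$ and record its action on the eight lines $H_1,\dots,H_8$ as a product of transpositions, reading the constraints off the configuration Table \ref{tab:MacLane} as was done in Proposition \ref{prop:anna} and its analogues. I would then choose two lines $H_i\neq H_j$ with $H_{\sigma(i)}\neq H_i,H_j$ and $H_{\sigma(j)}\neq H_j$, and normalize $H_i,H_j,H_{\sigma(i)},H_{\sigma(j)}$ to $x=0$, $x=z$, $y=0$, $y=z$ respectively, so that the coordinate swap $x\leftrightarrow y$ visibly implements $\sigma$ on this distinguished quadruple. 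Applying the Grid Lemma 3.10 of \cite{Fei:10b} to solve for the remaining lines then yields a one-parameter family whose parameter $t$ satisfies a quadratic with complex-conjugate roots $t^\pm$ (governed by the primitive cube roots of unity that define the MacLane configuration, so that $t^-=\overline{t^+}$). Setting $t=t^\pm$ produces the two representatives $\{H_i^\pm\}$, which are precisely the $+$ and $-$ points of the moduli space.

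The concluding step is the line-by-line verification that $\widetilde{\varphi}(H_i^+)=H_{\sigma(i)}^-$ for every $i$. Swapping $x$ and $y$ in each defining equation reproduces the combinatorial permutation $\sigma$, exactly as in the real cases, while the conjugation built into $\widetilde{\varphi}$ is what transports the coefficients of the $t^+$-family to those of the $t^-$-family via $t^+\mapsto\overline{t^+}=t^-$. Because the arrangement admits no real picture, I cannot appeal to a visibly symmetric figure as in Theorems \ref{thm:anna}--\ref{thm:7}; the entire force of the argument rests on this algebraic check.

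The main obstacle I anticipate is the bookkeeping of the parameter, for the reason flagged in Remark \ref{rem:inverse}. I would first pin down the precise relation between the roots (here $t^+ t^-$ equals the constant term of the quadratic, and $t^-=\overline{t^+}$ forces $|t^+|$ and the sign of the product), and then check whether $\widetilde{\varphi}$ realizes $t^\pm\mapsto t^\mp$ directly or only after an additional inversion $t^\pm\mapsto(t^\mp)^{-1}$ up to a constant. The delicate point is to confirm that conjugation-plus-reflection matches the lines index-for-index as $H_{\sigma(i)}^-$, not merely that it preserves the union $\cup_i H_i$; getting the indices right is exactly what certifies that $\sigma$ is the combinatorial symmetry realized by the geometric map $\widetilde{\varphi}$, and hence that the two complex-conjugate components are homeomorphic as pairs.
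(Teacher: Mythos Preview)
Your proposal is correct and follows essentially the same approach as the paper: apply Algorithm \ref{algorithm} by choosing an order-two $\sigma\in\Aut(\text{MacLane})$, normalize four $\sigma$-paired lines to $x=0,x=z,y=0,y=z$, solve for the remaining lines to obtain a quadratic in $t$ with roots $t^\pm=\frac{1\pm\sqrt3 i}{2}$, and verify $\widetilde{\varphi}(H_i^+)=H_{\sigma(i)}^-$ line by line. The paper carries this out with the specific choice $\sigma=(H_1\,H_5)(H_2\,H_6)(H_3\,H_4)(H_7\,H_8)$ and records the key parameter relation $(t^+)(t^-)=1$; since here $t^-=\overline{t^+}=(t^+)^{-1}$, the conjugation you build into $\widetilde{\varphi}$ and the inversion flagged in Remark \ref{rem:inverse} coincide, which is exactly the bookkeeping you anticipated. (Minor quibble: the roots of $t^2-t+1$ are primitive \emph{sixth} roots of unity, not cube roots.)
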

\begin{proof}
We apply Algorithm \ref{algorithm}:

(1)  Set $\sigma=$($H_1$ $H_5$)($H_2$ $H_6$)($H_3$ $H_4$)($H_7$ $H_8$).

(2)  Set $H_1$, $H_2$, $H_5$, and $H_6$ as $x=0$, $x=z$, $y=0$, and $y=z$, respectively.

(3)  This produces the following equations:
\begin{eqnarray*}
&H_1: x=0, \quad H_2: x=z, \quad H_3: x=tz, \quad H_4: y=tz, &\\
&\quad H_5: y=0, \quad H_6: y=z, \quad H_7: y=-tx+tz, \quad H_8: y=-t^{-1}x+z,&
\end{eqnarray*}
where $t^2-t+1=0$, or $t^{\pm}=\frac{1\pm\sqrt{3}i}{2}$.


(4) It is easy to check that $\varphi(L^+_i)=L^-_{\sigma(i)}$, where $t^+\mapsto t^-$ {\red with $(t^+)(t^{-})=1$.
}
\end{proof}



%


\medskip

\textbf{Example:  Nazir-Yoshinaga arrangement. }  We start with the combinatorics of the arrangement obtained from the geometry in Example 4.3 of \cite{NazYos}.  We give a configuration table for the triples in Table \ref{tab:NY}.
\begin{table}[h]
\begin{tabular}{|ccc|ccc|ccc|}
\hline
$L_1$ &	$L_2$ &	$L_3$ & $L_4$ &	$L_5$ &	$L_6$ & $L_7$ &	$L_8$ &	$L_9$\\
\hline
$e_1$ & $e_1$ & $e_1$ & $e_{10}$	&	$e_{10}$ & $e_{10}$ & $e_2$ & $e_3$	&	$e_4$\\
$e_2$ & $e_5$ & $e_8$ & $e_2$	    &	$e_3$    & $e_4$    & $e_7$ & $e_5$	&	$e_6$\\
$e_3$ & $e_6$ & $e_9$ & $e_5$     &	$e_6$    & $e_7$    & $e_9$ & $e_9$	&	$e_8$\\
$e_4$ &	$e_7$ &	      &	$e_8$	&	&	&	&	&	\\
\hline
\end{tabular}
\caption{A configuration table for the triples of the Nazir-Yoshinaga arrangement.}
\label{tab:NY}
\end{table}

\begin{proposition}
\label{prop:NY}
For the Nazir-Yoshinaga arrangement from \cite{NazYos}, the group of symmetries is $S_3$ with a $\mathbb{Z}_2$ subgroup generated by $\sigma$.
\end{proposition}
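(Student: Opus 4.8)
The plan is to exploit the fact that $L_1$, $L_2$, $L_4$ are the only three lines incident to four triple points, whereas the remaining six lines are each incident to exactly three. Since a lattice automorphism preserves the number of triples incident to each line, it must permute the set $\{L_1,L_2,L_4\}$ among itself. This produces a homomorphism
\[
\rho\colon \Aut(\mathcal{A})\longrightarrow \mathrm{Sym}\{L_1,L_2,L_4\}\cong S_3,
\]
and the whole statement reduces to showing that $\rho$ is an isomorphism. (That $\rho$ is well defined uses only the triple count, since heavy lines with four triples cannot be sent to light lines with three.)

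I would first show $\rho$ is injective, i.e. the only automorphism fixing each of $L_1,L_2,L_4$ is the identity. The pairwise intersections $e_1=L_1\cap L_2$, $e_2=L_1\cap L_4$, $e_5=L_2\cap L_4$ are triple points, hence fixed, and so the third line through each, namely $L_3$, $L_7$, $L_8$, is fixed. The remaining three lines are then forced: $e_3$ is the unique common triple of the fixed lines $L_1$ and $L_8$, so $L_5$ is fixed; likewise $e_7$ (common to $L_2,L_7$) fixes $L_6$, and $e_4$ (common to $L_1,L_6$) fixes $L_9$. Thus every line is fixed, the kernel is trivial, and in particular $|\Aut(\mathcal{A})|\le 6$.

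Next I would prove surjectivity by exhibiting explicit generators of $S_3$: a transposition realizing $(L_1\,L_2)$ and a $3$-cycle realizing $(L_1\,L_2\,L_4)$. Tracking the forced images of the third lines through the triangle vertices and of the ``private'' triples on each heavy line gives the candidates
\[
\sigma=(L_1\,L_2)(L_5\,L_6)(L_7\,L_8), \qquad
\rho_0=(L_1\,L_2\,L_4)(L_3\,L_8\,L_7)(L_5\,L_6\,L_9),
\]
each fixing the line $L_{10}$ if one views the full arrangement. Recalling that a permutation of the lines is a lattice automorphism exactly when it maps the family of triple points bijectively to itself (the doubles being the remaining pairwise intersections, which are then automatically preserved), the verification reduces to checking that all ten triples $e_1,\dots,e_{10}$ are carried to triples. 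Since the images of $\sigma$ and $\rho_0$ generate $\mathrm{Sym}\{L_1,L_2,L_4\}$, combined with injectivity we conclude $\Aut(\mathcal{A})\cong S_3$, with $\sigma$ generating the required $\mathbb{Z}_2$ subgroup.

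The main obstacle is the bookkeeping in the surjectivity step: one must trace how each of the six light lines and the ten triples are permuted and confirm no incidence is broken. The cleanest way to keep this consistent is to note that $L_3,L_7,L_8$ each pass through exactly one triangle vertex and meet at the unique triple $e_9=L_3\cap L_7\cap L_8$ lying on no heavy line; this distinguished triple is fixed by every automorphism and anchors the tracking, simultaneously ruling out any spurious extra symmetry. Everything beyond this is routine incidence verification.
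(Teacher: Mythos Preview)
Your proof is correct and follows essentially the same approach as the paper's: both identify the three ``heavy'' lines $L_1,L_2,L_4$ with four triples, argue that the automorphism group acts on this set, and then exhibit a transposition $\sigma=(L_1\,L_2)(L_5\,L_6)(L_7\,L_8)$ and a $3$-cycle (yours is the inverse of the paper's) while showing that no further symmetry survives. Your explicit framing via the homomorphism $\rho$ and its kernel is somewhat cleaner organizationally than the paper's narrative, but the content is the same. One small slip: the Nazir--Yoshinaga arrangement has only nine lines, so your remark about ``fixing the line $L_{10}$'' is extraneous and should be deleted.
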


\begin{proof}
Any symmetry of the arrangement must respect the lines that contain exactly four triples, and thus it must fix $\{L_1,L_2,L_4\}$ setwise.  There are three common triples $e_1,e_2,e_5$ amongst these lines, each in a different pair.  We show this gives the $S_3$ by producing a $\mathbb{Z}_3$ symmetry and the $\mathbb{Z}_2$ symmetry generated by $\sigma$.

These three lines contain all triples but $e_9$, and so this triple must be fixed.  Furthermore, the three lines $L_3,L_7,L_8$ through this triple each contain one of the three aforenamed triples $e_1,e_2,e_5$.  
  Thus there is no more symmetry.

To obtain the $\mathbb{Z}_3$ symmetry we consider ($e_1$ $e_2$ $e_5$) which gives ($L_1$ $L_4$ $L_2$)($L_3$ $L_7$ $L_8$) and then ($e_8$ $e_7$ $e_3$).  This in turn gives ($e_4$ $e_{10}$ $e_6$) and ($L_5$ $L_9$ $L_6$).

There is also a transposition $\sigma$ fixing the triple $e_1$ and thus the line $L_4$ that gives ($e_2$ $e_5$)($L_1$ $L_2$).  This gives ($L_7$ $L_8$) fixing the line $L_3$, which gives ($e_3$ $e_7$) fixing $e_8$.  Finally this gives ($e_4$ $e_6$)($L_5$ $L_6$) fixing triples $e_9$, $e_{10}$ and the line $L_9$.
\end{proof}

\begin{theorem}
\label{thm:NY}
The homeomorphism $\varphi$ maps the pair $(\mathbb{C}\mathbb{P}^2,+)$ to the pair $(\mathbb{C}\mathbb{P}^2,-)$.
\end{theorem}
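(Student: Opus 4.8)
The plan is to apply Algorithm \ref{algorithm} verbatim, following the template of Theorems \ref{thm:anna}, \ref{thm:6}, \ref{thm:7}, and \ref{thm:MacLane}, using the $\mathbb{Z}_2$ subgroup already isolated in Proposition \ref{prop:NY}. For Step (1) I record the nontrivial involution as a permutation of the lines, namely $\sigma=(L_1\ L_2)(L_5\ L_6)(L_7\ L_8)$ fixing $L_3$, $L_4$, and $L_9$. For Step (2) I choose two lines lying in distinct transposed pairs together with their $\sigma$-images to normalize the coordinate frame; for instance, setting $L_1$, $L_5$, $L_2$, $L_6$ as $x=0$, $x=z$, $y=0$, $y=z$ respectively. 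The hypotheses of Step (2) are checked directly, since $L_{\sigma(1)}=L_2\neq L_1,L_5$ and $L_{\sigma(5)}=L_6\neq L_5$; one also reads off Table \ref{tab:NY} that $L_1\cap L_5=e_3$ and $L_2\cap L_6=e_7$ are the two triples sent to the points at infinity, while $L_1\cap L_2=e_1$, $L_5\cap L_6=e_{10}$, $L_1\cap L_6=e_4$, and $L_5\cap L_2=e_6$ fill out the four finite grid corners. With this normalization $\varphi$ interchanges the four chosen lines exactly in the pattern prescribed by $\sigma$, and moreover sends the infinity point of $e_3$ to that of $e_7$, matching the transposition $(e_3\ e_7)$ inside $\sigma$.

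For Step (3) I would feed the incidences of Table \ref{tab:NY} into the Grid Lemma 3.10 of \cite{Fei:10b} and solve for the remaining lines $L_3$, $L_4$, $L_7$, $L_8$, $L_9$ as rational functions of a single parameter $t$. The residual incidence forces $t$ to satisfy a quadratic whose two roots $t^\pm$ are complex conjugate, in accordance with the known fact that the two realizations of this arrangement are exchanged by complex conjugation; substituting $t=t^\pm$ then produces the representatives $+$ and $-$.

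For Step (4) I swap $x$ and $y$ in the symbolic equations and verify, line by line, that the result coincides with the $\sigma$-relabeled equations of the other representative, that is $\varphi(L^+_i)=L^-_{\sigma(i)}$. As flagged in Remark \ref{rem:inverse}, the real content of this step is the arithmetic relation between the two roots that the $x\leftrightarrow y$ swap induces on the parameter: I expect $t^+\mapsto t^-$ to be realized through a relation such as $(t^+)(t^-)=1$, as found for MacLane, so that the swap realizes simultaneously the inversion $t^+\mapsto(t^+)^{-1}$ and, because the roots are then reciprocal and conjugate hence of unit modulus, the complex conjugation $t^+\mapsto\overline{t^+}$. This is precisely why the plain reflection $\varphi$, rather than the conjugate-reflection $\widetilde{\varphi}$ used in the MacLane case, can be expected to suffice here, and confirming it is the main obstacle: the first three steps are normalization and bookkeeping, but Step (4) conceals a genuine computation in which the remaining line equations, after the substitution, must land exactly on the relabeled equations of the conjugate component, and this succeeds only because of the specific parameter relation. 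Once the identity $\varphi(L^+_i)=L^-_{\sigma(i)}$ is confirmed, the homeomorphism of the pairs follows exactly as in Theorem \ref{thm:anna}.
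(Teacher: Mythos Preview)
Your plan follows the paper's approach exactly: apply Algorithm \ref{algorithm} with the involution $\sigma$ from Proposition \ref{prop:NY}. The paper, however, makes a different Step~(2) choice, normalizing $L_2,L_7,L_1,L_8$ as $x=0$, $x=z$, $y=0$, $y=z$; this yields the defining quadratic $2t^2-2t+1=0$ with $t^\pm=\tfrac{1\pm i}{2}$ and the relation $(t^+)(t^-)=\tfrac12$, not $1$. Your speculation that the roots should be reciprocal and hence of unit modulus is therefore wrong, and that is \emph{not} the mechanism by which the plain $\varphi$ (rather than $\widetilde\varphi$) succeeds here; Step~(4) goes through simply because the $x\leftrightarrow y$ swap converts each equation at $t^+$ into the $\sigma$-relabeled equation at $t^-$ via $t^+t^-=\tfrac12$. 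Your alternate normalization $L_1,L_5,L_2,L_6$ is also legitimate and would produce the quadratic $t^2-2t+2=0$ with roots $1\pm i$ and product $2$---again not unit modulus, but the check $\varphi(L_i^+)=L_{\sigma(i)}^-$ still succeeds (here the swap acts as $t\mapsto 2-t$). In short: right method and a valid alternative normalization, but your heuristic about why $\varphi$ suffices is off, and you have not actually written down the equations or carried out the line-by-line verification that constitutes the proof.
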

\begin{proof}
We apply Algorithm \ref{algorithm}:

(1)  Set $\sigma=$($L_1$ $L_2$)($L_3$)($L_4$)($L_5$ $L_6$)($L_7$ $L_8$)($L_9$).

(2)  Set $L_2$, $L_7$, $L_1$, and $L_8$ as $x=0$, $x=z$, $y=0$, and $y=z$, respectively.

(3)  This produces the following equations:
{\green 
\begin{eqnarray*}
&L_1: y=0, \quad L_2: x=0, \quad L_3: y=x, \quad L_4: y=-x+z, \quad L_5: y=\frac{1}{2t}z, &\\
&L_6:x=tz, \quad L_7: x=z, \quad L_8: y=z, \quad L_9: y=2t^2x+\frac{1}{2t}z,&
\end{eqnarray*}
}
where $2t^2-2t+1=0$, or $t^{\pm}=\frac{1\pm i}{2}$.


{\red
(4) It is easy to check that $\varphi(L^+_i)=L^-_{\sigma(i)}$, where $t^+\mapsto t^-$ because $(t^+)(t^{-})=\frac{1}{2}$.
}
\end{proof}


\medskip

\subsection{Application to complex ten-line arrangements}
\label{subsec:complex}

To verify the algorithm above, we now consider examples that were already found to be complex conjugate and thus not Zariski pairs.

We consider the three complex cases 11.B.3.b.2.iii, 11.B.3.b.2.iv and 11.B.2.iv. The (complex) dimension of the moduli spaces of all these cases is one{\red , and so there will be another variable to consider.  Pay close attention to following usage of notation.

\begin{notation}
\label{rem:variables}
In the discussion below the variable $s$ will give us the two disconnected components, as a solution $s^\pm$ to some quadratic defining equation, while the variable $t$ will act as the free variable giving dimension one.

It is important to distinguish between, on the one hand, the merely two distinct allowable values $s^\pm$ of the first variable and, on the other hand, the full dimensional variable $t$ of each of the two disconnected components.

Even though it is a terrible abuse of notation and would lead to some confusion were it not for this note, when we restrict to the component given by $s^+$ we will call the one-dimensional family of arrangements $t^+$.  The same holds similarly for $t^-$.
\end{notation}

Now just as we have the map $\varphi$ sending, say, $x$ to $y$, we will also need a map comparing values of $t$ in one component to values of $t$ in the other component.
}

{\blue
However, instead of providing this map, we will simply choose one value of the $t$ in each component to show its symmetry.  The rest of each component can be treated by Randell's Isotopy Theorem \cite{Ran}, as mentioned above.
}


\medskip

\textbf{Example:  Arrangement 11.B.3.b.2.iii. }  In Table \ref{tab:11B3b2iii} we show the configuration table for the triples as given in \cite{Fei:10b}.
\begin{table}[h]
\begin{tabular}{|ccc|ccc|cccc|}
\hline
$L_1$ &	$L_2$ &	$L_3$ & $L_4$ & $L_5$ &	$L_6$ &	$L_7$ & $L_8$ &	$L_9$ & $L_{10}$\\
\hline
$e_1$ & $e_1$ & $e_2$ & $e_1$			&	$e_2$ 	 & $e_3$ 		& $e_4$  	 & $e_5$		&	$e_5$ 	 & $e_7$\\
$e_2$ & $e_3$ & $e_3$ & $e_{10}$	&	$e_6$ 	 & $e_4$ 		& $e_7$ 	 & $e_6$		&	$e_9$ 	 & $e_9$\\
$e_4$ & $e_6$ & $e_8$ & $e_{11}$	&	$e_{10}$ & $e_{11}$ & $e_8$ 	 & $e_8$		&	$e_{10}$ & $e_{11}$\\
$e_5$ & $e_7$ & $e_9$ & 	&	 & 	&  & 	&	 & \\
\hline
\end{tabular}
\caption{A configuration table for the triples of the arrangement 11.B.3.b.2.iii.}
\label{tab:11B3b2iii}
\end{table}

\begin{proposition}
\label{prop:11.B.3.b.2.iii.sym}
For the case \emph{11.B.3.b.2.iii.} from \cite{Fei:10b}, the group of symmetries is $\mathbb{Z}_2$.
\end{proposition}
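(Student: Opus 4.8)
The plan is to follow the strategy already used for arrangements $\{6\}$ and $\{7\}$ and for the Nazir-Yoshinaga arrangement in Proposition \ref{prop:NY}: begin from the most rigid feature of the configuration table and propagate the forced images of lines and triples. First I would count the triples on each line. Reading off Table \ref{tab:11B3b2iii}, exactly three lines $L_1$, $L_2$, $L_3$ carry four triples while the remaining seven carry three, so any lattice isomorphism $\tau$ must preserve the set $\{L_1,L_2,L_3\}$. Next I would record the pairwise intersections inside this set: the only triple common to $L_1$ and $L_2$ is $e_1$, the only one common to $L_1$ and $L_3$ is $e_2$, and the only one common to $L_2$ and $L_3$ is $e_3$. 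At this stage the data is formally the same as in the Nazir-Yoshinaga case, where such a configuration produced $S_3$, so the crux is to exhibit the extra rigidity that collapses the symmetry to $\mathbb{Z}_2$.

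The key observation I would isolate is that the three ``third lines'' through the shared triples are not interchangeable. The third line through $e_1$ is $L_4=\{e_1,e_{10},e_{11}\}$, through $e_2$ is $L_5=\{e_2,e_6,e_{10}\}$, and through $e_3$ is $L_6=\{e_3,e_4,e_{11}\}$. Among the six triples $e_4,\dots,e_9$ that lie on only one of the distinguished lines, $L_5$ contains $e_6$ and $L_6$ contains $e_4$, whereas $L_4$ contains none of them. This distinguishes $L_4$ from both $L_5$ and $L_6$, so $\tau(L_4)=L_4$ and hence $\tau(e_1)=e_1$. Since $e_1$ lies on exactly $L_1$ and $L_2$ among the distinguished lines, $\tau$ must preserve the pair $\{L_1,L_2\}$ and fix $L_3$. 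This already rules out a $3$-cycle and any transposition that moves $L_3$, leaving at most the identity and the swap $L_1\leftrightarrow L_2$.

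To finish I would verify that this swap genuinely extends to a lattice isomorphism. Fixing $L_3$ and swapping $L_1\leftrightarrow L_2$ forces $(e_2\ e_3)$ while fixing $e_1$; tracing the third lines through $e_2,e_3$ forces $L_5\leftrightarrow L_6$, and comparing this against the fixed line $L_4$ forces $(e_{10}\ e_{11})$ and $(e_4\ e_6)$. Following the triples $e_5,e_7$ and $e_8,e_9$ on the distinguished lines then forces $(L_7\ L_8)$, $(L_9\ L_{10})$, and $(e_5\ e_7)$, while $e_8$ and $e_9$ are fixed. The resulting permutation
\[
\sigma=(L_1\ L_2)(L_5\ L_6)(L_7\ L_8)(L_9\ L_{10}),
\]
fixing $L_3$ and $L_4$, can be checked line by line against Table \ref{tab:11B3b2iii} to confirm it carries each triple set onto another. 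Together with the identity this yields exactly $\mathbb{Z}_2$. I expect the main obstacle to be the second paragraph: pinning down precisely which incidence breaks the superficial $S_3$ symmetry, since the distinguished-line data alone is identical to that of the $S_3$ example, and only the finer pattern of the third lines reveals the difference. The verification in the last paragraph is routine but must be carried out along every branch to rule out an inconsistent extension.
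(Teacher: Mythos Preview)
Your proposal is correct and follows essentially the same strategy as the paper's proof: isolate the four-triple lines $\{L_1,L_2,L_3\}$, pass to the third lines $\{L_4,L_5,L_6\}$ through their pairwise common triples, pin down $L_4$ as distinguished, and then propagate to obtain the unique nontrivial involution $(L_1\ L_2)(L_5\ L_6)(L_7\ L_8)(L_9\ L_{10})$. The only difference is the invariant used to single out $L_4$: the paper observes that among $\{L_4,L_5,L_6\}$ there are exactly two common triples $e_{10},e_{11}$ and that $L_4$ is the unique line containing both, whereas you observe that $L_4$ is the unique one of the three containing none of the ``residual'' triples $e_4,\dots,e_9$; these are equivalent distinguishing features and the rest of the argument is identical.
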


\begin{proof}
Any symmetry of the arrangement must respect the lines that contain
exactly four triples, and thus it must fix $\{L_1, L_2, L_3\}$
setwise. There are three common triples $e_1$, $e_2$, $e_3$ amongst
these lines. Since the triples $e_1$, $e_2$, $e_3$ lie on three other
lines, we know that the set $\{L_4, L_5, L_6\}$ must also be fixed. There
are two common triples $e_{10}$, $e_{11}$ amongst these three lines, and
hence the line $L_4$ which contains both of these must be fixed.

Aside from the identity on the set $\{L_4, L_5, L_6\}$, this also
gives the transposition $(e_2~ e_3)(e_6~e_4)(L_5~ L_6)$. This
transposition gives $(L_1~L_2)$ and fixes the line $L_3$.

By the triples $e_4$, $e_6$ on two other lines, transposition $(e_2~
e_3)(e_6~e_4)(L_5~ L_6)$ also gives $(L_7~L_8)$, which must
transpose the set $\{e_5, e_7\}$ and fix the triple $e_{8}$. Similarly,
transposition $(e_5~e_7)$ gives the transposition $(L_9~L_{10})$,
which must fix the triple $e_{9}$.

Thus we have $(L_1~L_2)(L_5~L_6)(L_7~L_{8})(L_9~L_{10})$ while
fixing the lines $L_3$ and $L_4$.

Lastly we consider the pointwise identity on the set $\{L_4, L_5,
L_6\}$ from above. This fixes the triples $e_1$, $e_2$, $e_3$ which
also fixes $e_{10}$, $e_{11}$ followed by $e_4$, $e_{6}$ and $e_5$,
$e_7$ according to the same argument used above for the transposition. This
gives the identity, and thus there are no other symmetries.
\end{proof}

\begin{theorem}
\label{thm:11B3b2iii}
The homeomorphism $\varphi$ maps the pair $(\mathbb{C}\mathbb{P}^2,t^+)$ to the pair $(\mathbb{C}\mathbb{P}^2,t^-)$.
\end{theorem}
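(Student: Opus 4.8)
The plan is to carry out the four steps of Algorithm \ref{algorithm} exactly as in the proofs of Theorems \ref{thm:anna} through \ref{thm:NY}, with the one new wrinkle that the moduli space is one-dimensional. For step (1) I would take $\sigma$ to be the generator of the $\mathbb{Z}_2$ subgroup identified in Proposition \ref{prop:11.B.3.b.2.iii.sym}, namely $(L_1~L_2)(L_5~L_6)(L_7~L_8)(L_9~L_{10})$ while fixing $L_3$ and $L_4$. For step (2) I would then pick two $\sigma$-moved lines, say $L_5$ and $L_7$, which satisfy the disjointness hypotheses of the algorithm, and place $L_5, L_7, L_6, L_8$ at $x=0$, $x=z$, $y=0$, $y=z$ respectively. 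This is precisely the placement under which the reflection $\varphi$ interchanging $x$ and $y$ mirrors the transposition $\sigma$ on these four coordinate lines, since $x=0$ and $y=0$ are swapped (matching $L_5\leftrightarrow L_6$) and likewise $x=z$ and $y=z$ (matching $L_7\leftrightarrow L_8$).

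For step (3) I would invoke the Grid Lemma 3.10 of \cite{Fei:10b} to solve for the remaining six lines in terms of parameters. The essential difference from the zero-dimensional cases is that, as explained in Notation \ref{rem:variables}, the resulting defining equations involve two parameters: a parameter $s$ whose quadratic defining equation has two roots $s^\pm$ cutting out the two disconnected components, together with a free parameter $t$ sweeping out each one-dimensional component. I expect the ten defining equations to come out as explicit rational expressions in $s$ and $t$, with the two components recovered by setting $s=s^+$ or $s=s^-$.

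The main obstacle lives in step (4), and it is exactly the issue flagged in Notation \ref{rem:variables}: to compare the two components one must relate the free parameter $t$ on the $s^+$-side to the free parameter $t$ on the $s^-$-side under $\varphi$, and constructing this reparametrization in closed form is the hardest part. Following the strategy announced just before this example, I would avoid it entirely: I would fix one convenient value of $t$ in each component and verify the identity $\varphi(L_i^+)=L_{\sigma(i)}^-$ only for those two representative arrangements, checking as before that interchanging $x$ and $y$ in the defining equations sends the $s^+$-arrangement to the $\sigma$-relabeled $s^-$-arrangement, with the Galois conjugation $s^+\mapsto s^-$ possibly accompanied by an inversion as in Remark \ref{rem:inverse}.

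Finally, since by Randell's Isotopy Theorem \cite{Ran} the embedding type is constant along each connected component, establishing the homeomorphism at a single representative of each component is enough to conclude that $\varphi$ furnishes a homeomorphism of the pairs $(\mathbb{C}\mathbb{P}^2,t^+)$ and $(\mathbb{C}\mathbb{P}^2,t^-)$, as claimed.
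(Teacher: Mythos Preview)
Your proposal is correct and follows essentially the same approach as the paper: apply Algorithm \ref{algorithm} with the $\sigma$ from Proposition \ref{prop:11.B.3.b.2.iii.sym}, specialize the free parameter $t$ to a single value to reduce to a check between two explicit arrangements, and then invoke Randell's Isotopy Theorem for the rest of each one-dimensional component. The only cosmetic difference is your choice of grid lines in step (2): the paper places $L_1,L_5,L_2,L_6$ at $x=0$, $x=z$, $y=0$, $y=z$ (rather than your $L_5,L_7,L_6,L_8$), which leads to a clean defining quadratic $s^2-ts+t^2=0$ and, after setting $t=1$, the relation $(s^+)(s^-)=1$ that realizes $s^+\mapsto s^-$ under $\varphi$.
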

\begin{proof}
We apply Algorithm \ref{algorithm}:

(1)  Set $\sigma=$($L_1$ $L_2$)($L_3$)($L_4$)($L_5$ $L_6$)($L_7$ $L_8$)($L_9$ $L_{10}$).

(2)  Set $L_1$, $L_5$, $L_2$, and $L_6$ as $x=0$, $x=z$, $y=0$, and $y=z$, respectively.

(3)  This produces the following equations:
\begin{eqnarray*}
&L_1: x=0, \quad L_2: y=0, \quad L_3: z=0, \quad L_4:
y=(s-t)x, \quad
L_5: x=z, \quad L_6:y=z,&\\
&L_7: y=tx+z, \quad L_8: y=tx-tz, \quad L_9: y=sx-tz, \quad
L_{10}: y=sx+\frac{s}{t}z,&
\end{eqnarray*}
where $s^2-ts+t^2=0$, or $s^{\pm}=\frac{1\pm\sqrt{3}i}{2}t$.

 
{\red
(4) Setting $t=1$ we get $s^2-s+1=0$, or $s^{\pm}=\frac{1\pm\sqrt{3}i}{2}$.  It is easy to check that $\varphi(L^+_i)=L^-_{\sigma(i)}$, where $s^+\mapsto s^-$ because $(s^+)(s^{-})=1$.  We apply Randell's Isotopy Theorem to the rest of each component.}
\end{proof}


\medskip

\textbf{Example:  Arrangement 11.B.3.b.2.iv. }  In Table \ref{tab:11B3b2iv} we show the configuration table for the triples as given in \cite{Fei:10b}.
\begin{table}[h]
\begin{tabular}{|ccc|ccc|cccc|}
\hline
$L_1$ &	$L_2$ &	$L_3$ & $L_4$ & $L_5$ &	$L_6$ &	$L_7$ & $L_8$ &	$L_9$ & $L_{10}$\\
\hline
$e_1$ & $e_1$ & $e_2$ & $e_1$			&	$e_2$ 	 & $e_3$ 		& $e_4$  	 & $e_5$		&	$e_5$ 	 & $e_7$\\
$e_2$ & $e_3$ & $e_3$ & $e_{10}$	&	$e_6$ 	 & $e_4$ 		& $e_7$ 	 & $e_6$		&	$e_9$ 	 & $e_9$\\
$e_4$ & $e_6$ & $e_8$ & $e_{11}$	&	$e_{11}$ & $e_{10}$ & $e_8$ 	 & $e_8$		&	$e_{10}$ & $e_{11}$\\
$e_5$ & $e_7$ & $e_9$ & 	&	 & 	&  & 	&	 & \\
\hline
\end{tabular}
\caption{A configuration table for the triples of the arrangement 11.B.3.b.2.iv.}
\label{tab:11B3b2iv}
\end{table}

\begin{proposition}
\label{prop:11.B.3.b.2.iv.sym}
For the case \emph{11.B.3.b.2.iv.} from \cite{Fei:10b}, the group of symmetries is $\mathbb{Z}_2$.
\end{proposition}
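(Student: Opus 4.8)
The plan is to mirror the argument used for the almost identical case \emph{11.B.3.b.2.iii} in Proposition \ref{prop:11.B.3.b.2.iii.sym}, since the configuration table for \emph{11.B.3.b.2.iv} differs from it only in the placement of the triples $e_{10}$ and $e_{11}$ on the lines $L_5$ and $L_6$. First I would pin down the lines that are forced to be fixed. Counting triples shows that $L_1$, $L_2$, $L_3$ are the only lines carrying four triples, so any lattice automorphism must preserve $\{L_1,L_2,L_3\}$ setwise. Their pairwise intersections give the distinguished triples $e_1=L_1\cap L_2$, $e_2=L_1\cap L_3$, and $e_3=L_2\cap L_3$, and the third line through each of these, namely $L_4$, $L_5$, and $L_6$ respectively, forces $\{L_4,L_5,L_6\}$ to be preserved setwise as well. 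The crucial local check is that among these three lines the only repeated triples are $e_{10}=L_4\cap L_6$ and $e_{11}=L_4\cap L_5$ (the lines $L_5$ and $L_6$ meet only in a double), so both repeated triples lie on $L_4$. Thus $L_4$ is singled out and must be fixed even after the relabeling of $e_{10},e_{11}$, leaving only the possible transposition $(L_5~L_6)$ on the rest of that set.

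Next I would split into the two cases for the action on $\{L_5,L_6\}$. In the transposition case, fixing $L_4$ keeps the pair $\{L_1,L_2\}$ invariant while interchanging the pairs $\{L_1,L_3\}$ and $\{L_2,L_3\}$ attached to $L_5$ and $L_6$; this forces $L_3$ to be fixed and produces $(L_1~L_2)$. I would then propagate the symmetry by tracing each remaining triple as the intersection of an already-determined triple of lines: this should yield $(e_2~e_3)(e_4~e_6)(e_5~e_7)(e_{10}~e_{11})$ with $e_1,e_8,e_9$ fixed, which in turn gives $(L_7~L_8)$ and $(L_9~L_{10})$ while fixing $L_3$ and $L_4$. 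The upshot is the single nontrivial candidate $(L_1~L_2)(L_5~L_6)(L_7~L_8)(L_9~L_{10})$, and one checks directly that this is a genuine lattice isomorphism.

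In the complementary case I would take the identity on $\{L_4,L_5,L_6\}$ and show it forces the identity everywhere. Fixing $L_4$, $L_5$, $L_6$ individually fixes the pairs $\{L_1,L_2\}$, $\{L_1,L_3\}$, $\{L_2,L_3\}$ setwise, which together pin down $L_1,L_2,L_3$, hence the triples $e_1,e_2,e_3$ and then $e_{10},e_{11}$. Continuing along the lines $L_7,L_8,L_9,L_{10}$ by the same intersection bookkeeping shows every line and triple is fixed. Combining the two cases gives exactly two symmetries, so $\Aut(\{11.\mathrm{B}.3.\mathrm{b}.2.\mathrm{iv}\})\cong\mathbb{Z}_2$. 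The main obstacle is not conceptual but a matter of careful bookkeeping: I must verify that the induced permutation of triples is consistent at every incidence and genuinely respects the lattice, and in particular confirm that the swap $e_{10}\leftrightarrow e_{11}$ distinguishing this case from \emph{iii} does not open up any extra automorphism permuting $L_4$ with $L_5$ or $L_6$, which the observation that $L_4$ alone carries both repeated triples rules out.
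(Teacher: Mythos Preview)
Your proposal is correct and follows exactly the approach the paper takes: the paper's own proof simply reads ``The proof is the same as the proof of Proposition~\ref{prop:11.B.3.b.2.iii.sym}. We omit the details.'' You have carried out precisely that mirroring, correctly isolating the one place where the configuration table differs (the swap of $e_{10}$ and $e_{11}$ on $L_5$ and $L_6$) and checking that $L_4$ is still the unique line among $\{L_4,L_5,L_6\}$ carrying both repeated triples, so the argument goes through unchanged.
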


\begin{proof}
The proof is the same as the proof of Proposition \ref{prop:11.B.3.b.2.iii.sym}. We omit
the details.
\end{proof}

\begin{theorem}
\label{thm:11B3b2iv}
The homeomorphism $\varphi$ maps the pair $(\mathbb{C}\mathbb{P}^2,t^+)$ to the pair $(\mathbb{C}\mathbb{P}^2,t^-)$.
\end{theorem}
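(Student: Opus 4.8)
The plan is to run Algorithm \ref{algorithm} exactly as in the proof of Theorem \ref{thm:11B3b2iii}, taking advantage of the fact that the combinatorics of 11.B.3.b.2.iv differs from that of 11.B.3.b.2.iii only by interchanging the two triples $e_{10}$ and $e_{11}$ on the lines $L_5$ and $L_6$ (compare Tables \ref{tab:11B3b2iii} and \ref{tab:11B3b2iv}). Proposition \ref{prop:11.B.3.b.2.iv.sym} already supplies the $\mathbb{Z}_2$ symmetry, whose nontrivial element is the same permutation $\sigma=(L_1\ L_2)(L_5\ L_6)(L_7\ L_8)(L_9\ L_{10})$ fixing $L_3$ and $L_4$. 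I would therefore take this $\sigma$ in step (1).

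For step (2) I would again pin down the four lines $L_1,L_5,L_2,L_6$ as $x=0$, $x=z$, $y=0$, $y=z$, respectively; this is the natural choice because $\sigma(L_1)=L_2$ and $\sigma(L_5)=L_6$, so the pair $\{x=0,\,x=z\}$ is carried to $\{y=0,\,y=z\}$ precisely by the reflection $\varphi$. In step (3) I would apply the Grid Lemma 3.10 of \cite{Fei:10b} to solve for the remaining lines just as before. Since the only combinatorial change is that $e_{10}$ now lies on $L_4,L_6,L_9$ and $e_{11}$ now lies on $L_4,L_5,L_{10}$ (rather than the reverse), the incidence constraints differ only for the lines $L_9$ and $L_{10}$. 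I expect the same quadratic $s^2-ts+t^2=0$, so $s^{\pm}=\frac{1\pm\sqrt{3}i}{2}t$, to reappear and govern the two disconnected components, with $t$ retained as the free parameter realizing dimension one.

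For step (4), specializing $t=1$ as in the previous case reduces the defining quadratic to $s^2-s+1=0$, giving $s^{\pm}=\frac{1\pm\sqrt{3}i}{2}$; I would then verify directly that $\varphi(L^+_i)=L^-_{\sigma(i)}$ for every $i$, with the Galois conjugation $s^+\mapsto s^-$ realized through the relation $(s^+)(s^-)=1$ (cf.\ Remark \ref{rem:inverse}). Randell's Isotopy Theorem \cite{Ran} then promotes this single-parameter verification to the whole one-dimensional component, completing the claim that $\varphi$ sends $(\mathbb{C}\mathbb{P}^2,t^+)$ to $(\mathbb{C}\mathbb{P}^2,t^-)$.

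The main obstacle I anticipate is the bookkeeping in step (3): correctly re-deriving the equations for $L_9$ and $L_{10}$ under the swapped incidences at $e_{10}$ and $e_{11}$, and then confirming that the reflection $x\leftrightarrow y$ composed with the Galois conjugation $s^+\mapsto s^-$ sends each line to the image prescribed by $\sigma$. Everything else is mechanical once the equations are in hand, so the delicate point is ensuring that the altered incidences do not disturb the symmetry that made Theorem \ref{thm:11B3b2iii} succeed.
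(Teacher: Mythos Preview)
Your overall approach---running Algorithm \ref{algorithm} with the same permutation $\sigma$ and the same normalization $L_1,L_5,L_2,L_6\leftrightarrow\{x=0,\,x=z,\,y=0,\,y=z\}$, then specializing $t=1$ and invoking Randell's Isotopy Theorem---is exactly what the paper does.

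However, two of your concrete expectations in step (3) are wrong, and you should be aware of them before carrying out the computation. First, the swapped incidences at $e_{10},e_{11}$ do not only affect $L_9$ and $L_{10}$: because $L_5$ and $L_6$ are among your four fixed reference lines, moving $e_{10}$ from $L_5$ to $L_6$ (and $e_{11}$ from $L_6$ to $L_5$) alters the constraints on every line passing through those points, and in the paper's derivation the equations of $L_4$, $L_7$, $L_8$ all change as well. Second, and more importantly, the defining quadratic is \emph{not} the same as in case 11.B.3.b.2.iii. In the paper one obtains $s^2+s+1=0$, so that $s^\pm=\frac{-1\pm\sqrt{3}i}{2}$ are primitive cube roots of unity, independent of $t$; this is qualitatively different from $s^2-ts+t^2=0$. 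Fortunately the verification in step (4) still goes through, since one again has $(s^+)(s^-)=1$, so your strategy succeeds---but not for the reason you anticipate.
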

\begin{proof}
We apply Algorithm \ref{algorithm}:

(1)  Set $\sigma=$($L_1$ $L_2$)($L_3$)($L_4$)($L_5$ $L_6$)($L_7$ $L_8$)($L_9$ $L_{10}$).

(2)  Set $L_1$, $L_5$, $L_2$, and $L_6$ as $x=0$, $x=z$, $y=0$, and $y=z$, respectively.

(3)  This produces the following equations:
\begin{eqnarray*}
&L_1: x=0, \quad L_2: y=0, \quad L_3: z=0, \quad L_4:
y=\frac{t}{1+s}x, \quad
L_5: x=z, \quad L_6:y=z,&\\
&L_7: y=sx+z, \quad L_8: y=sx-sz, \quad L_9: y=tx-sz, \quad
L_{10}: y=tx+\frac{t}{s}z,&
\end{eqnarray*}
where $s^2+s+1=0$, or $s^{\pm}=\frac{-1\pm\sqrt{3}i}{2}$.


{\red
(4) We set $t=1$.  It is easy to check that $\varphi(L^+_i)=L^-_{\sigma(i)}$, where $s^+\mapsto s^-$ because $(s^+)(s^{-})=1$.  We apply Randell's Isotopy Theorem to the rest of each component.}
\end{proof}


\vspace{1in}

\textbf{Example:  Arrangement 11.B.2.iv. }  In Table \ref{tab:11B2iv} we show the configuration table for the triples as given in \cite{Fei:10b}.
\begin{table}[h]
\begin{tabular}{|ccc|cc|cc|c|cc|}
\hline
$L_1$ &	$L_2$ &	$L_3$ & $L_4$	& $L_5$ &	$L_6$ &	$L_7$ & $L_8$ &	$L_9$ & $L_{10}$\\
\hline
$e_1$ & $e_1$ & $e_2$ & $e_1$			&	$e_2$ 	 & $e_5$ 		& $e_6$ 	 & $e_6$		&	$e_7$ & $e_7$\\
$e_2$ & $e_5$ & $e_8$ & $e_8$			&	$e_5$ 	 & $e_{10}$ & $e_{10}$ & $e_9$		&	$e_9$ & $e_8$\\
$e_3$ & $e_6$ & $e_9$ & $e_{11}$	&	$e_{11}$ & $e_3$ 		& $e_4$ 	 & $e_{11}$	&	$e_4$ & $e_3$\\
$e_4$ & $e_7$ & $e_{10}$ & 	&	 & 	&  & 	&	 & \\
\hline
\end{tabular}
\caption{A configuration table for the triples of the arrangement 11.B.2.iv.}
\label{tab:11B2iv}
\end{table}

\begin{proposition}
\label{prop:11.B.2.iv.sym}
For the case 11.B.2.iv. from \cite{Fei:10b}, the group of symmetries is $\mathbb{Z}_2$.
\end{proposition}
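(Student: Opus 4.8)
The plan is to follow the same incidence-chasing strategy used in the proofs of Propositions \ref{prop:11.B.3.b.2.iii.sym} and \ref{prop:11.B.3.b.2.iv.sym}, adapted to the slightly different incidence pattern recorded in Table \ref{tab:11B2iv}. First I would single out the three lines $L_1$, $L_2$, $L_3$ that carry exactly four triples; any lattice isomorphism must preserve this set. The decisive new feature here, in contrast to the two preceding cases where all three four-triple lines pairwise share a triple, is the asymmetry among $L_1$, $L_2$, $L_3$: the line $L_1$ meets $L_2$ in the triple $e_1$ and meets $L_3$ in the triple $e_2$, whereas $L_2$ and $L_3$ share no triple at all. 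Hence $L_1$ is the unique four-triple line incident to a triple on each of the other two, so every symmetry must fix $L_1$ and can at most interchange $L_2$ and $L_3$. This already bounds $\Aut$ by $\mathbb{Z}_2$, and it remains to show that the transposition $(L_2~L_3)$ genuinely extends to a lattice isomorphism and that fixing each of $L_1$, $L_2$, $L_3$ forces the identity.

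For the transposition case I would assume $(L_2~L_3)$ with $L_1$ fixed and propagate the forced images through the incidences. Since $e_1=L_1\cap L_2$ and $e_2=L_1\cap L_3$, this forces $(e_1~e_2)$ and hence interchanges the respective third lines, giving $(L_4~L_5)$. The common triple $e_{11}$ of $L_4$ and $L_5$ is then fixed, which forces $(e_5~e_8)$ and therefore $(L_6~L_{10})$; the common triple $e_3$ of $L_6$ and $L_{10}$ is fixed, forcing $(e_7~e_{10})$ and hence $(L_7~L_9)$; finally $e_4$, $e_6$, $e_9$ and $L_8$ fall into place. I expect this to produce the involution $(L_2~L_3)(L_4~L_5)(L_6~L_{10})(L_7~L_9)$ fixing $L_1$ and $L_8$, accompanied by $(e_1~e_2)(e_5~e_8)(e_7~e_{10})(e_6~e_9)$ on the triples, with $e_3$, $e_4$, $e_{11}$ fixed.

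For the identity case I would assume $L_1$, $L_2$, $L_3$ are each fixed. Then $e_1$ and $e_2$ are fixed as intersection points, so the third lines $L_4$, $L_5$ through them are fixed; their common triple $e_{11}$ is fixed, whence $e_8$ and $e_5$ are fixed, and so are the third lines $L_{10}$, $L_6$, $L_8$ through $e_8$, $e_5$, $e_{11}$. Continuing, the common triple $e_3$ of $L_6$ and $L_{10}$ is fixed, so $e_7$, $e_{10}$ and then $L_9$, $L_7$ are fixed. With every line fixed, the map is the identity. Combining the two cases yields $\Aut=\mathbb{Z}_2$.

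The bulk of the work, and the only place an error could creep in, is the bookkeeping of these cascades: at each step one must verify that the triple claimed to be common to two lines really is their unique common triple in Table \ref{tab:11B2iv}, so that the ``third line through a fixed point'' and ``common triple of two fixed lines'' deductions are genuinely forced rather than merely consistent. Since the configuration table records all points of multiplicity at least three and the double points are recovered from these, I would close by checking directly that the candidate involution carries each triple's line-set bijectively onto another triple's line-set, confirming it is an honest lattice isomorphism. The conceptual crux is the observation that $L_1$ is pinned by the asymmetry of the four-triple lines; beyond the verification just described I do not anticipate any genuine difficulty.
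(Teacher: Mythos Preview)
Your proposal is correct and follows essentially the same approach as the paper: identify the three four-triple lines, pin $L_1$ via the two common triples $e_1,e_2$ (equivalently, via the asymmetry you note that $L_2\cap L_3$ is not a triple), and then cascade the forced images through $(L_4~L_5)$, $(L_6~L_{10})$, $(L_7~L_9)$ to reach the involution, with the pointwise-identity case handled by the same cascade. The paper's proof is slightly terser but the logic and the resulting involution are identical.
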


\begin{proof}
Any symmetry of the arrangement must respect the lines that contain
exactly four triples, and thus it must fix $\{L_1, L_2, L_3\}$
setwise. There are two common triples $e_1$, $e_2$ amongst these
lines, and hence the line $L_1$ which contains both of these must be
fixed.

Aside from the identity on this set, this also gives the
transposition $(e_1~ e_2)(L_2~ L_3)$. By the triples $e_1$, $e_2$ on
two other lines, this transposition also gives $(L_4~L_5)$, which
must transpose the set $\{e_5, e_8\}$ and fix $e_{11}$. Similarly,
this gives the transposition $(L_6~L_{10})$, which must transpose
the set $\{e_7, e_{10}\}$ and fix $e_{3}$. Thus we get the
transposition $(L_7~L_{9})$, which must transpose the set $\{e_6,
e_{9}\}$ and fix $e_{4}$. Therefore the line $L_8$ must be fixed. Thus we
have $(L_2~L_3)(L_4~L_5)(L_6~L_{10})(L_7~L_9)$ while fixing the
lines $L_1$ and $L_8$.

Lastly we consider the pointwise identity on the set $\{L_1, L_2,
L_3\}$ from above. This fixes the triples $e_1$, $e_2$, which also
fixes $e_5$, $e_8$ followed by $e_7$, $e_{10}$ and $e_6$, $e_9$
according to the same argument used above for the transposition.  This gives the identity, and thus there are no other symmetries.
\end{proof}

\begin{theorem}
\label{thm:11B2iv}
The homeomorphism $\varphi$ maps the pair $(\mathbb{C}\mathbb{P}^2,t^+)$ to the pair $(\mathbb{C}\mathbb{P}^2,t^-)$.
\end{theorem}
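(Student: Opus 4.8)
The plan is to apply Algorithm \ref{algorithm} exactly as in the proofs of Theorems \ref{thm:11B3b2iii} and \ref{thm:11B3b2iv}, since the arrangement 11.B.2.iv likewise has a one-dimensional moduli space with two disconnected components together with a $\mathbb{Z}_2$ automorphism. For step (1), I would take $\sigma$ to be the nontrivial involution produced in Proposition \ref{prop:11.B.2.iv.sym}, namely $\sigma = (L_2~L_3)(L_4~L_5)(L_6~L_{10})(L_7~L_9)$, fixing the lines $L_1$ and $L_8$.

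For step (2), I would select a coordinate frame compatible with both $\sigma$ and $\varphi$ by using two of the transposed pairs: setting $L_2$, $L_4$, $L_3$, and $L_5$ as $x=0$, $x=z$, $y=0$, and $y=z$, respectively. This is a legitimate choice in the sense of the algorithm, since $L_2 \neq L_4$, while $L_{\sigma(2)} = L_3$ and $L_{\sigma(4)} = L_5$ are distinct from these and from each other. Because $\sigma$ sends $L_2 \mapsto L_3$ and $L_4 \mapsto L_5$, this guarantees at the outset that the reflection $\varphi$ carries the lines $x=0$ and $x=z$ to $y=0$ and $y=z$, which is precisely the pairing that step (4) of the algorithm demands.

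For step (3), I would run the Grid Lemma 3.10 of \cite{Fei:10b} against the incidences recorded in Table \ref{tab:11B2iv}, propagating these coordinates through the triple points to solve for defining equations of the remaining lines $L_1, L_6, L_7, L_8, L_9, L_{10}$. As in the two preceding cases, I expect this to produce a family parametrized by a free variable $t$, accounting for the one-dimensional family inside each component, together with a second parameter $s$ constrained by a quadratic whose two roots $s^\pm$ single out the two disconnected components (denoted $t^\pm$ in the sense of Notation \ref{rem:variables}).

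The hard part is step (4), and the delicacy is twofold. First, because this moduli space is one-dimensional, I cannot check the equality $\varphi(L_i^+) = L^-_{\sigma(i)}$ for every member of the family directly; instead I would fix a convenient slice (say $t=1$), verify the equality on that slice, and then invoke Randell's Isotopy Theorem \cite{Ran} to transport the conclusion across the remainder of each component. Second, I must confirm that the $x \leftrightarrow y$ interchange effected by $\varphi$ genuinely realizes the Galois conjugation $s^+ \mapsto s^-$ on the surviving parameter; concretely this means tracking the algebraic relation between the two roots (a relation of the form $s^+ s^- = c$, as in the earlier cases, possibly requiring the additional inverse operation flagged in Remark \ref{rem:inverse}) and checking that it is consistent with the way the swapped coordinates feed back into the equations for $L_7$ through $L_{10}$. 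Massaging the Grid Lemma bookkeeping of step (3) into a form where this correspondence is manifest is where any computational error is most likely to hide, and so that is the step I would scrutinize most carefully.
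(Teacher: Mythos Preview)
Your proposal is correct and follows the paper's proof essentially verbatim: the same involution $\sigma$, the same coordinate choices $L_2,L_4,L_3,L_5 \leftrightarrow x=0,x=z,y=0,y=z$, the same two-parameter output with $s$ quadratic over $t$, and the same step (4) strategy of slicing at $t=1$, using $s^+s^-=1$ to realize the Galois swap, and invoking Randell's Isotopy Theorem for the rest of each component. The paper's explicit quadratic is $s^2 - ts + t^2 = 0$, specializing to $s^\pm = \tfrac{1\pm\sqrt{3}i}{2}$ at $t=1$, which is exactly the scenario you anticipate.
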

\begin{proof}
We apply Algorithm \ref{algorithm}:

(1)  Set $\sigma=$($L_1$)($L_2$ $L_3$)($L_4$ $L_5$)($L_6$ $L_{10}$)($L_7$ $L_9$)($L_8$).

(2)  Set $L_2$, $L_4$, $L_3$, and $L_5$ as $x=0$, $x=z$, $y=0$, and $y=z$, respectively.

(3)  This produces the following equations:
\begin{eqnarray*}
&L_1: z=0, \quad L_2: x=0, \quad L_3: y=0, \quad L_4:
x=z, \quad
L_5: y=z, \quad L_6:y=tx+z,&\\
&L_7: y=sx+\frac{s}{t}z, \quad L_8:
y=(1-\frac{s}{t})x+\frac{s}{t}z, \quad L_9: y=sx-tz, \quad
L_{10}: y=tx-tz,&
\end{eqnarray*}
where $s^2-ts+t^2=0$, or $s^{\pm}=\frac{1\pm\sqrt{3}i}{2}t$.


{\red
(4) Setting $t=1$ we get $s^2-s+1=0$, or $s^{\pm}=\frac{1\pm\sqrt{3}i}{2}$.  It is easy to check that $\varphi(L^+_i)=L^-_{\sigma(i)}$, where $s^+\mapsto s^-$ because $(s^+)(s^{-})=1$.  We apply Randell's Isotopy Theorem to the rest of each component.}
\end{proof}


\section{Some cases from the literature which pose problems for Algorithm \ref{algorithm}}
\label{sec:counter}

Although this Algorithm \ref{algorithm} can be applied without a problem for the cases above, it does not appear to be applicable as stated for some other cases, specifically the Falk-Sturmfels arrangement and Rybnikov's example, which actually gives a Zariski pair.

We include these counterexamples here, in the hopes that they will lead to understanding what conditions might be necessary in order to ensure that this technique holds in general.

\medskip

\textbf{Example:  Falk-Sturmfels arrangement. }  Now consider the Falk-Sturmfels arrangement (cited as unpublished in \cite{CoSuc}) of nine lines.  Like the arrangements above, it has a quadratic defining equation.  However its roots are real (and Galois conjugate) and not complex (conjugate).

We start with the combinatorics of the arrangement obtained from the geometry in Example 5.2 of \cite{NazYos}.  We give a configuration table for the points of higher multiplicity in Table \ref{tab:FS}.
\begin{table}[h]
\begin{tabular}{|cccc|cccc|c|}
\hline
$L_1$ &	$L_2$ &	$L_3$ & $L_4$ &	$K_1$ &	$K_2$ & $K_3$ &	$K_4$ &	$H_9$\\
\hline
$q$   & $q$   & $q$   & $q$  	&	$e_1$ & $e_2$ & $e_3$ & $e_4$	&	$e_1$\\
$e_1$ & $e_2$ & $e_3$ & $e_4$	&	$e_6$ & $e_5$ & $e_5$ & $e_6$	&	$e_2$\\
$e_5$ & $e_6$ & $e_7$ & $e_8$ &	$e_8$ & $e_7$ & $e_8$ & $e_7$	&	$e_3$\\
      &	      &	      &	    	&	&	&	&	&	$e_4$ \\
\hline
\end{tabular}
\caption{A configuration table for the points of higher multiplicity of the Falk-Sturmfels arrangement.}
\label{tab:FS}
\end{table}

\begin{proposition}
\label{prop:FS}
The Falk-Sturmfels arrangement has $\mathbb{Z}_4$ as its automorphism group with a $\mathbb{Z}_2$ subgroup.
\end{proposition}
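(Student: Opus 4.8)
The plan is to mimic the invariant-hunting arguments used for Propositions \ref{prop:combsym93iiiACG} and \ref{prop:11.B.2.iv.sym}, reading every incidence off of Table \ref{tab:FS}. First I would isolate the distinguished flats. The point $q$ is the unique quadruple point, and the four lines through it are exactly $L_1,L_2,L_3,L_4$; hence any $\tau\in\Aut(\mathcal{A})$ fixes $q$ and preserves the set $\{L_1,L_2,L_3,L_4\}$ setwise. Among the remaining lines, $H_9$ is the unique line passing through four triple points (each $L_i$ carries only $q$ together with two triples, while each $K_j$ carries exactly three triples and no quadruple), so $\tau(H_9)=H_9$ and the set $\{K_1,K_2,K_3,K_4\}$ is preserved as well.

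Next I would set up the index bookkeeping. The four triples on the fixed line $H_9$ are $e_1,e_2,e_3,e_4$, and $e_i$ lies on $L_i$, $K_i$, and $H_9$. Since $\tau$ fixes $H_9$ and keeps the $L$-set and the $K$-set separate (the former meet $q$, the latter do not), $\tau$ permutes $\{e_1,\dots,e_4\}$ by some $\alpha\in S_4$ and is thereby forced to act via $\tau(L_i)=L_{\alpha(i)}$, $\tau(K_i)=K_{\alpha(i)}$, $\tau(e_i)=e_{\alpha(i)}$. Thus $\Aut(\mathcal{A})$ embeds into $S_4$ through $\tau\mapsto\alpha$, and no transposition of the $L$- and $K$-roles can occur.

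The heart of the argument is to pin down which $\alpha$ are admissible using the remaining triples $e_5,e_6,e_7,e_8$. Reading the table, each $L_i$ meets, via a single extra triple, exactly two of the $K$'s, and so fails to meet a unique $K_{g(i)}$ at a triple, where $g=(1\,4\,2\,3)$ is the $4$-cycle sending $1\mapsto 4$, $2\mapsto 3$, $3\mapsto 1$, $4\mapsto 2$. A permutation $\alpha$ preserves the full incidence structure if and only if it preserves this non-incidence relation, that is $\alpha g=g\alpha$: necessity is immediate, and sufficiency holds because the extra triple on $L_i$ is $\{L_i\}\cup\{K_m:m\neq i,\,g(i)\}$, which $\alpha$ carries to the extra triple on $L_{\alpha(i)}$ precisely when $\alpha$ commutes with $g$. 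Hence $\Aut(\mathcal{A})$ is the centralizer of a $4$-cycle in $S_4$, namely the cyclic group $\langle g\rangle\cong\mathbb{Z}_4$ (of order $24/6=4$). Its $\mathbb{Z}_2$ subgroup is $\langle g^2\rangle$ with $g^2=(1\,2)(3\,4)$, realized on the lines as $(L_1\,L_2)(L_3\,L_4)(K_1\,K_2)(K_3\,K_4)$.

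I expect the only genuine obstacle to be the bookkeeping in the last paragraph: correctly extracting the non-incidence permutation $g$ from the eight triples and verifying that commuting with $g$ is \emph{sufficient} (not merely necessary) for preserving every triple. Once that reduction to ``centralizer of a $4$-cycle'' is in place, the identification with $\mathbb{Z}_4$ and the display of its $\mathbb{Z}_2$ subgroup are routine.
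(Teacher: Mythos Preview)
Your argument is correct and cleaner than the paper's. Both proofs begin the same way: the unique quadruple $q$ forces any automorphism to preserve $\{L_1,\dots,L_4\}$; the line $H_9$ is singled out as the unique line through four triples; and the common triple $e_i=L_i\cap K_i\cap H_9$ forces a single permutation $\alpha\in S_4$ to govern the action on the $L$'s, the $K$'s, and the $e_i$'s simultaneously. From there, however, the paper proceeds by direct case analysis on where $e_5$ can be sent among $\{e_5,e_6,e_7,e_8\}$, ruling out fixed points, then tracking the chain of forced images until the explicit $4$-cycle $(L_1\,L_3\,L_2\,L_4)(K_1\,K_3\,K_2\,K_4)$ emerges and the remaining cases are seen to be its powers. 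Your route is more structural: you encode the residual incidences by the single ``missing $K$'' function $g=(1\,4\,2\,3)$, observe that preserving the triples $e_5,\dots,e_8$ is equivalent to $\alpha g=g\alpha$, and conclude at once that $\Aut(\mathcal{A})$ is the centralizer of a $4$-cycle in $S_4$, hence $\langle g\rangle\cong\mathbb{Z}_4$. This buys brevity and makes the source of the $\mathbb{Z}_4$ transparent (it is literally the symmetry group of the non-incidence relation), whereas the paper's approach has the virtue of exhibiting the generating automorphism explicitly in the course of the argument. Note that your $g$ is the inverse of the paper's displayed generator, so the two descriptions agree.
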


\begin{proof}
Any symmetry of the arrangement must respect the lines $L_1,L_2,L_3,L_4$ that pass through the lone quadruple point labeled $q$ in the configuration table.  A permutation on the set $\{L_1,L_2,L_3,L_4\}$ of lines will also respectfully act on the sets $\{e_1,e_2,e_3,e_4\}$ and $\{e_5,e_6,e_7,e_8\}$ of triples also on these lines.  Of course this fixes the line $H_9$.  

By the first set of triples, this permutation must also respectfully act on the set $\{K_1,K_2,K_3,K_4\}$ of lines containing these triples.  This must also respectfully act on the set $\{\{e_6,e_8\},$$\{e_5,e_7\},$ $\{e_5,e_8\},$$\{e_6,e_7\}\}$ of pairs of remaining triples.

The following shows that no permutation except for the identity may fix an element of the set $\{e_5,e_6,e_7,e_8\}$, each of which is the same up to symmetry.  Suppose, for example, that the triple $e_5$ is fixed.  Then the line $L_1$ is fixed, and the lines $K_2,K_3$ may either be acted on by the identity or a transposition.  Supposing it is a transposition, then ($K_2$ $K_3$) gives ($e_2$ $e_3$)($e_7$ $e_8$), which leads to ($L_3$ $L_4$), a contradiction.

Let us suppose now that the permutation sends the triple $e_5$ to $e_7$.  Then this sends the line $L_1$ to $L_3$ and the triple $e_1$ to $e_3$.  This forces the unordered pair of lines $\{K_2,K_3\}$ to be sent to the set $\{K_2,K_4\}$.

Suppose by way of contradiction that the line $K_2$ is sent to itself, and so the line $K_3$ is sent to the line $K_4$.  Then the triple $e_7$ must be sent back to $e_5$, and the triple $e_3$ must be sent to $e_4$.  This would send the line $L_3$ to $L_4$, forcing the triple $e_7$ to be sent to $e_8$, instead, a contradiction.

Thus we may assume this permutation sends the ordered pair of lines $K_2,K_3$ to the ordered pair of lines $K_4,K_2$.  This sends the triple $e_2$ to $e_4$, the triple $e_7$ to $e_6$, the triple $e_3$ to $e_2$, and the triple $e_8$ to $e_5$.  This gives the permutation ($L_1$ $L_3$ $L_2$ $L_4$)($K_1$ $K_3$ $K_2$ $K_4$)($e_1$ $e_3$ $e_2$ $e_4$)($e_5$ $e_7$ $e_6$ $e_8$) yielding a $\mathbb{Z}_4$ symmetry.

The inverse of this permutation sends the triple $e_5$ to $e_8$, as expected.  The only other remaining option is to send the triple $e_5$ to $e_6$, and this yields a $\mathbb{Z}_2$ symmetry that lives within the $\mathbb{Z}_4$ symmetry above.
\end{proof}


\begin{remark}
\label{rem:FS}
In \cite[Example 5.2]{NazYos}, Nazir and Yoshinaga make use of this $\mathbb{Z}_4$ combinatorial symmetry of the Falk-Sturmfels arrangement without recognizing it.  They use this symmetry to show that it is not a Zariski pair.

However the $\mathbb{Z}_2\subseteq\mathbb{Z}_4$ symmetry does not give the two disconnected components of the moduli space!  {\red In fact, the $\mathbb{Z}_2$-action on the moduli space is trivial.}  Thus Algorithm \ref{algorithm} does not apply as it is currently stated to this case.
\end{remark}



\medskip

\textbf{Example:  Rybnikov's arrangement. }  This example comes from taking two copies of the MacLane arrangement and identifying three lines from each.




As stated below Example 1.10 of \cite{ABC2}, the group of symmetries for Rybnikov's arrangement is $S_3\times\mathbb{Z}_2$.  

\begin{remark}
\label{rem:Ryb}
Rybnikov's example is, as we mentioned at the start of this paper, a Zariski pair, and thus no symmetry should exist to allow us to identify components of the moduli space.

However there is a $\mathbb{Z}_2\subseteq S_3\times\mathbb{Z}_2$ symmetry subgroup of the automorphism group.  Thus Algorithm \ref{algorithm} does not apply as is currently stated to this case.  The authors suspect this has something to do with having two copies of the MacLane arrangement.
\end{remark}


\newcommand{\etalchar}[1]{$^{#1}$}
\def\cprime{$'$}
\providecommand{\bysame}{\leavevmode\hbox to3em{\hrulefill}\thinspace}
\providecommand{\MR}{\relax\ifhmode\unskip\space\fi MR }
\providecommand{\MRhref}[2]{%
  \href{http://www.ams.org/mathscinet-getitem?mr=#1}{#2}
}
\providecommand{\href}[2]{#2}

\end{document}